\theoremstyle{plain}
\newtheorem{theorem}{Theorem}[section]
\newtheorem{conjecture}[theorem]{Conjecture}
\newtheorem{lemma}[theorem]{Lemma}
\newtheorem{proposition}[theorem]{Proposition}
\newtheorem{claim}[theorem]{Claim}
\theoremstyle{definition}
\theoremstyle{remark}
\numberwithin{equation}{section}
\numberwithin{theorem}{section}
\numberwithin{table}{section}
\numberwithin{figure}{section}
\newcommand{\R}{\mathbb R}
\def\({\left(}
\def\){\right)}
\def\logg{\log_{(2)}}
\begin{document}
\title[Siegel zeros and zeros of the derivative]{Landau-Siegel zeros and zeros of the derivative of the Riemann zeta function}
\author{David~W.~Farmer and Haseo Ki}

\thanks{
Research of the first author supported by the
American Institute of Mathematics, and the National Science Foundation.
Research of the second author
supported by the Korea Research Foundation Grant
funded by the Korean Government(MOEHRD, Basic Research Promotion
Fund)(KRF-2008-313-C00009).
}

\thispagestyle{empty}
\vspace{.5cm}
\begin{abstract}
We show that if
the derivative of the Riemann zeta function
has  sufficiently many zeros close to the critical line, then the
zeta function has many closely spaced zeros.  This gives a
condition on the zeros of the derivative of the zeta function
which implies a lower bound of the class numbers of imaginary
quadratic fields.
\end{abstract}

\address{
{\parskip 0pt
American Institute of Mathematics\endgraf
farmer@aimath.org\endgraf
\null
Department of Mathematics\endgraf
Yonsei University\endgraf
haseo@yonsei.ac.kr\endgraf
}
  }

\maketitle

\section{Introduction}

The spacing between zeros of the Riemann zeta-function and the location
of zeros of the derivative of the zeta-function are closely
related problems which have connections to other topics in 
number theory.

For example, 
if the zeta-function had a large number of pairs of zeros that were
separated by less than half their average spacing, one would obtain
an effective lower bound on the class numbers of imaginary
quadratic fields~\cite{M,CI}.
Also, Speiser proved that the Riemann hypothesis is equivalent
to the assertion that the
nontrivial zeros of the derivative of the zeta-function,
$\zeta'$, are
to the right of the critical line~\cite{Sp}.  There is a
quantitative version of Speiser's theorem~\cite{LM} which
is the basis for Levinson's method~\cite{L}.  In Levinson's
method there is a loss caused by the zeros of $\zeta'$ which
are close to the critical line, so it would be helpful to
understand the horizontal distribution of zeros of $\zeta'$.
The intuition is that the spacing of zeros of the zeta-function
should determine the horizontal distribution of zeros of
the derivative.  Specifically, a pair of closely spaced zeros
of $\zeta(s)$ gives rise to a zero of $\zeta'(s)$ close to the
critical line.  Our main result is a partial converse,
showing that \emph{sufficiently many} zeros of $\zeta'(s)$
close to the $\tfrac12$-line implies the existence of
many closely spaced zeros of~$\zeta(s)$.  See Theorem~\ref{t:la}.

We assume the Riemann hypothesis and write the zeros of
$\zeta$ as $\rho_j=\tfrac12+i\gamma_j$ and the zeros of
$\zeta'$ as $\beta_j'+i\gamma_j'$, where in both cases
we list the zeros by increasing imaginary part.
We consider the normalized gaps between zeros of~$\zeta$ and
the normalized distance of $\rho_j'$ to the right of the
critical line, given by
\begin{align}\label{def:lambdas}
\lambda_j=\mathstrut &(\gamma_{j+1}-\gamma_j)\log\gamma_j \cr
\lambda_j'=\mathstrut & (\beta_j'-\tfrac12) \log\gamma_j'.
\end{align}
We are interested in how small the normalized gaps can be,
and how small the normalized distance to the critical line
can be, so we set
\begin{align}
\lambda=\mathstrut &\liminf_{j\to\infty} \lambda_j\\
\lambda'=\mathstrut &\liminf_{j\to\infty}\lambda_j' .
\end{align}
We also consider the cumulative densities of $\lambda_j$ and $\lambda_j'$,
given by
\begin{align}
m(\nu) =\mathstrut & \liminf_{J\to\infty}
\frac{1}{J} \, \#\{j\le J\ :\ \lambda_j \le \nu\}\cr
m'(\nu) =\mathstrut & \liminf_{J\to\infty} \frac{1}{J}\,
\#\{j\le J\ :\ \lambda_j' \le \nu\}.
\end{align}

Soundararajan's~\cite{S} Conjecture~B states that $\lambda=0$ if and only if
$\lambda'=0$.  
This amounts to conjecturing that zeros of $\zeta'(s)$ close to the
$\tfrac12$-line can only arise from a pair of closely spaced
zeros of~$\zeta(s)$.
Zhang~\cite{Z} showed that (on RH) $\lambda=0$ implies
$\lambda'=0$.
Thus, Soundararajan's conjecture is almost certainly true
because $\lambda=0$ follows from standard conjectures on
the zeros of the zeta-function, based on random matrix theory.

However, the second author\cite{K} showed that $\lambda=0$ and $\lambda'=0$
are not logically equivalent.  Specifically, Ki\cite{K} proved
\begin{theorem}\label{thm:ki} (Haseo Ki \cite{K})  Assuming RH,
$\lambda' >0$ is equivalent to 
\begin{equation}\label{eqn:zetacondition}
M(\gamma_j):= \sum_{0<|\gamma_j-\gamma_n|<1} \frac{1}{\gamma_j-\gamma_n}
=O(\log \gamma_j) .
\end{equation}
\end{theorem}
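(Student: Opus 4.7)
My plan is to derive an identity relating any given zero $\rho'=\tfrac12+\alpha+i\gamma'$ of $\zeta'$ to $M(\gamma_j)$ at the nearest zero $\rho_j=\tfrac12+i\gamma_j$ of $\zeta$, and to read off both directions of the equivalence from it. Under RH, Speiser's theorem gives $\alpha>0$, so $\zeta(\rho')\ne 0$ and it makes sense to evaluate $\zeta'/\zeta$ at $\rho'$.

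The starting point is the standard local partial-fraction expansion
$$\frac{\zeta'}{\zeta}(s) = \sum_{|\gamma_n - T|\le 1} \frac{1}{s-\rho_n} + O(\log T),$$
valid for $s=\sigma+iT$ with $\sigma$ in a fixed bounded range, which comes from the Hadamard product for $\zeta$ together with Stirling. Taking $s=\rho'$ and using $(\zeta'/\zeta)(\rho')=0$ yields
$$\sum_{|\gamma_n - \gamma'|\le 1} \frac{1}{\alpha + i(\gamma' - \gamma_n)} = O(\log\gamma'). \qquad(\star)$$
Next I would recentre on the nearest $\zeta$-zero $\gamma_j$ and write $\delta=\gamma'-\gamma_j$. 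Separating the $n=j$ term in $(\star)$ and Taylor-expanding each remaining summand via
$$\frac{1}{\alpha + i(\delta + (\gamma_j-\gamma_n))} = \frac{-i}{\gamma_j-\gamma_n} + O\!\left(\frac{|\alpha+i\delta|}{(\gamma_j-\gamma_n)^2}\right),$$
the leading $n\neq j$ contribution equals $-iM(\gamma_j)$. After estimating the tail $|\gamma_n-\gamma'|>1$ by the usual conjugate-zero pairing, one obtains an identity of the form
$$\frac{1}{\alpha + i\delta} = i\,M(\gamma_j) + O(\log\gamma_j). \qquad(\star\star)$$

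From $(\star\star)$ both directions follow. If $M(\gamma_j)=O(\log\gamma_j)$, then $|\alpha+i\delta|\gg 1/\log T$ for every $\zeta'$-zero close to the critical line; combining this with the real part of $(\star)$, which is a sum of positive terms controlling $\alpha/(\alpha^2+\delta^2)$, one forces $\alpha\gg 1/\log T$ in the regime where $|\delta|$ is comparable to $\alpha$, and this is the only regime in which $\rho'$ can be close to the critical line, since such zeros arise from close pairs of $\zeta$-zeros centred near $\gamma_j$. Conversely, if $\lambda'>0$ then for each $\gamma_j$ an associated $\zeta'$-zero (provided by a Rouch\'e-type argument on the critical strip near $\rho_j$) satisfies $\alpha\gg 1/\log T$, and $(\star\star)$ rearranges to $|M(\gamma_j)|\le|\alpha+i\delta|^{-1}+O(\log T)=O(\log T)$.

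The main obstacle is controlling the error in $(\star\star)$: the naive Taylor bound costs $|\alpha+i\delta|\sum 1/(\gamma_j-\gamma_n)^2$, and this second moment can itself reach $\log^2\gamma_j$ (or more if close pairs of $\zeta$-zeros are present), which risks swamping the target $O(\log\gamma_j)$. Two ways forward are to use the smallness of $|\alpha+i\delta|$ self-consistently to close the estimate, or to extract the second-order term explicitly before bounding the remainder. A secondary delicate point is establishing the precise correspondence between $\zeta'$-zeros close to the critical line and their associated $\gamma_j$, which is what enables the converse direction.
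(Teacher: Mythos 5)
This theorem is not proved in the paper at all: it is quoted from Ki's paper \cite{K}, so there is no internal proof to compare with, and your sketch has to stand on its own. It does not, because the identity $(\star\star)$, which carries the entire argument, is not established and is in fact unavailable with an $O(\log\gamma_j)$ error in exactly the situations the theorem is about. The linearization error is $|\alpha+i\delta|\sum_{n\neq j}(\gamma_j-\gamma_n)^{-2}$, and neither factor is under control: the second moment is already $\asymp\log^2\gamma_j$ for regularly spaced zeros and can be far larger when $\zeta$ has close pairs of zeros (which cannot be excluded, since $\lambda=0$ is precisely one of the scenarios the equivalence must handle), while in the direction ``$\lambda'>0\Rightarrow M(\gamma_j)=O(\log\gamma_j)$'' the offset $\delta=\gamma'-\gamma_j$ of the associated $\zeta'$-zero can be as large as half of a large gap between consecutive $\zeta$-zeros, i.e.\ $\gg\log\log\gamma_j/\log\gamma_j$, so even the benign second-moment size gives an error $\gg\log\gamma_j\log\log\gamma_j$. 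This swamps the target $O(\log\gamma_j)$ precisely in the large-gap/imbalance configuration that, as the introduction of this paper emphasizes, is the whole point of Ki's criterion (many small same-sign terms making $M(\gamma_j)$ large without any close pair). Your proposed repair by ``self-consistency'' is circular in the forward direction, because the smallness of $|\alpha+i\delta|$ is exactly what you are trying to rule out, and in the backward direction the bound must hold for \emph{every} $j$, not only for $j$ where $|\alpha+i\delta|$ happens to be small.

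The second unproved ingredient is the correspondence you invoke for the converse: that every zero $\rho_j$ of $\zeta$ has an associated zero of $\zeta'$ close enough (in ordinate) for the expansion around $\gamma_j$ to apply. No Rouch\'e argument is actually given, and none can give such a pointwise statement cheaply: $\zeta'$ has asymptotically fewer zeros than $\zeta$ in $0<t<T$ (the counting functions differ by $\sim\frac{\log 2}{2\pi}T$), so one cannot attach to each $\gamma_j$ a nearby $\gamma'$ with $|\delta|\ll 1/\log\gamma_j$; and without smallness of $|\delta|$ the expansion fails as above. Similarly, in the forward direction your dismissal of the regime $\beta'-\tfrac12\le|\gamma'-\gamma_j|$ with $|\gamma'-\gamma_j|$ not small, on the grounds that near-critical zeros of $\zeta'$ ``arise from close pairs of $\zeta$-zeros,'' assumes exactly the statement (Soundararajan's Conjecture B intuition) that Ki's theorem is meant to make precise. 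What is actually needed, as in \cite{K} and as mirrored in Section 2 of this paper for the weaker density statement, is a case analysis comparing $\beta'-\tfrac12$, $|\gamma'-\gamma_j|$ and the local gap structure, using both the real and the imaginary parts of the partial-fraction formula and genuine control of the nearby zero configuration (here obtained only for almost all zeros, via the set $Z(T)$ and moment estimates); a single first-order Taylor step around the nearest zero cannot deliver the unconditional equivalence.
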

 
Note that the theorem implies Zhang's result (that $\lambda=0$
implies $\lambda'=0$), because if $\lambda=0$ then for some $j$
the sum
in \eqref{eqn:zetacondition} will be large because an individual
term in the sum is large.  But that is not the only way for 
$M(\gamma_j)$
to be large.  It is possible that there could be an imbalance
in the distribution of zeros, such as a very large gap between
neighboring zeros, which makes the sum 
large because many small terms have the same sign.  

For example, suppose there were consecutive zeros of the
zeta function with a gap
of size 1, followed by $c \log T$
zeros equally spaced (this cannot happen, but we are illustrating
a point).  Then $M(\gamma)$
would be $\gg \log T \log\log T$. 
That possibility is the reason attempts to prove
$\lambda'=0$ implies $\lambda=0$ have been unsuccessful.
For example, Garaev and Y{\i}ld{\i}r{\i}m~\cite{GY}
required the stronger assumption
$\lambda_J'(\log\log \gamma_J')^2=o(1)$ in order to conclude
$\lambda_J=o(1)$.

The discussion in the previous paragraph shows that,
without detailed knowledge of the distribution of zero spacings,
one requires $ M(\gamma)\ge C \log T \log\log T$ for
any $C>0$ in order to conclude $\lambda =0$.  
It is possible that this could be improved by proving
results about the rigidity of the spacing between zeros of
the zeta function.  Random matrix theory could give a clue
about the limits of this approach.  This would involve finding
the expected maximum of the random matrix analogue of the
sum
\begin{equation}\label{eqn:zetairregular}
\sum_{\frac{1}{\log \gamma_j}<|\gamma_j-\gamma_n|<1}
\frac{1}{\gamma_j-\gamma_n}.
\end{equation}
Unfortunately, the necessary random matrix
calculation may be quite difficult
because a lower bound on $|\gamma_j-\gamma_n|$ requires the exclusion
of a varying number of intervening zeros, so the combinatorics
of the random matrix calculation may be intricate.

In this paper we consider not $\lambda$ and $\lambda'$,
but the density functions $m(\nu)$ and $m'(\nu)$.
In the next section we illustrate this with the example described above,
and then we state our main result.

\subsection{Examples with equally spaced zeros}\label{sec:pictures}

We illustrate Theorem~\ref{thm:ki} with examples 
which can help build intuition for why $\lambda'=0$
does not imply $\lambda=0$. 

Our example involves degree $N$ polynomials with all zeros on the
unit circle.  In other words, characteristic polynomials of
matrices in the unitary group~$U(N)$.
In these examples.
$\lambda>0$ but $\lambda'=0$, where $\lambda$ and $\lambda'$
refer respectively to the large $N$ limits of the normalized gap between
zeros, and the rescaled distance between zeros of the
derivative and the unit circle.
This is the random matrix analogue of $\lambda$ and $\lambda'$
for the zeta function.

Figure~\ref{fig:1a} illustrates the case of 16 zeros in the
interval$\{e^{i\theta}\ :\ 0\le \theta\le \pi/2\}$.
The plot on the left shows the zeros of the polynomial and
its derivative.  The figure on the right is the same plot
``unrolled'':  the horizontal axis is the argument,
and the vertical axis is the distance from the unit circle,
rescaled by a constant factor.

\begin{figure}[htp]
\begin{center}
\scalebox{0.7}[0.7]{\includegraphics{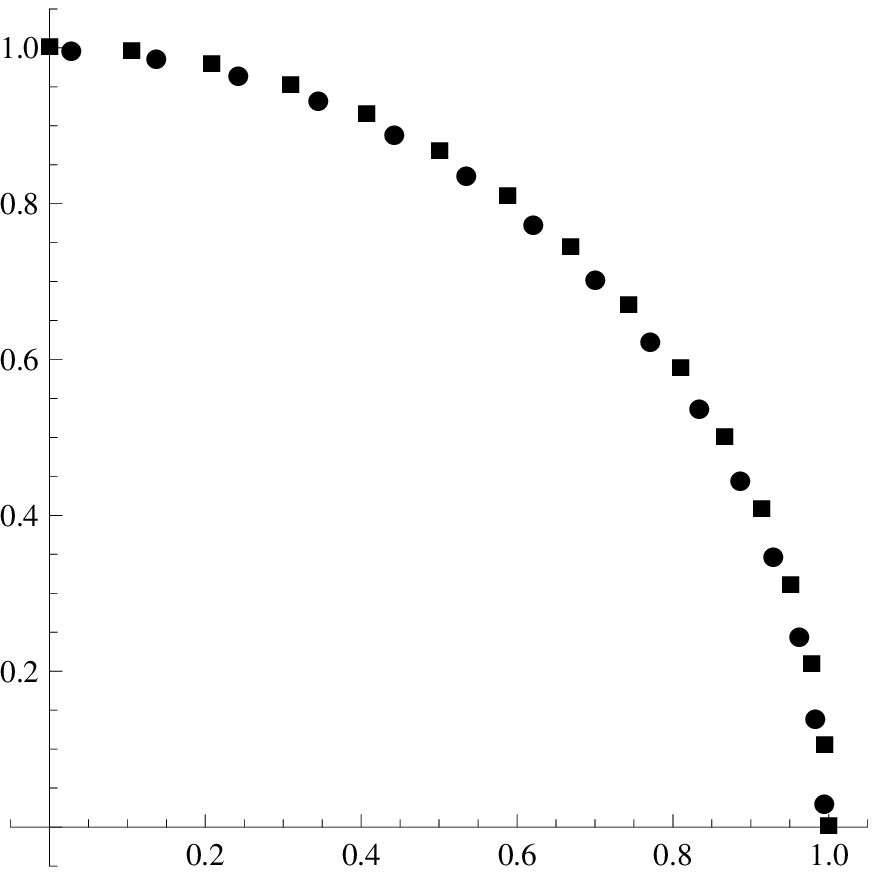}}
\hskip 0.5in
\scalebox{0.7}[0.7]{\includegraphics{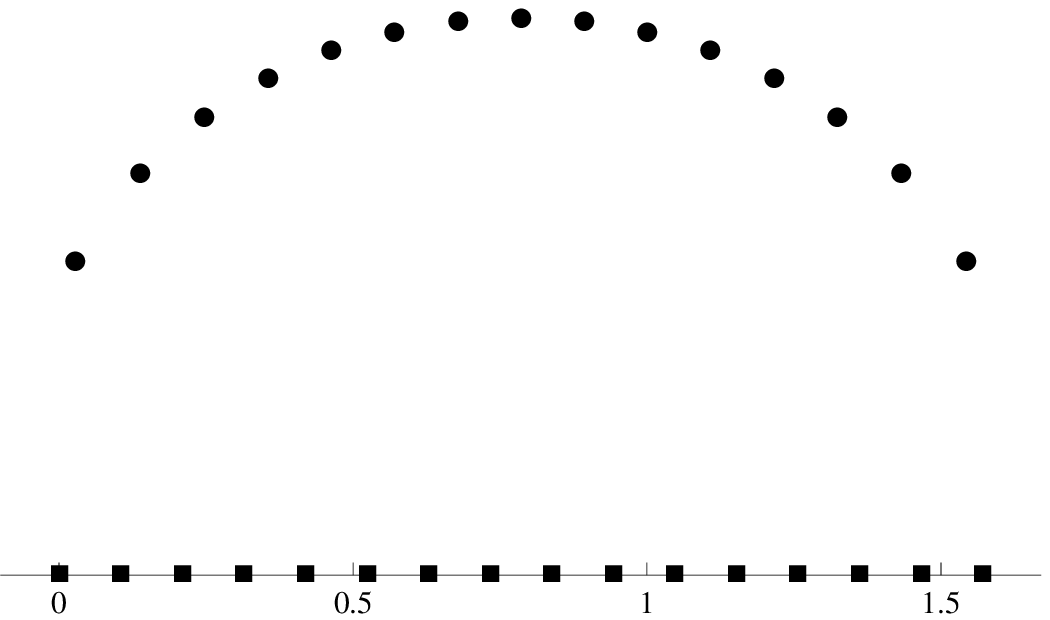}}
\caption{\sf
On the left, the zeros and the zeros of the derivative of a
degree 16 polynomial having all zeros in $\frac14$ of the
unit circle.  On the right, the image of those zeros 
under the mapping $r e^{i \theta} \mapsto (\theta, 2 \pi \cdot 16(1-r))$.
Zeros of the function are shown as small squares, and zeros of
the derivative as small dots.
} \label{fig:1a}
\end{center}
\end{figure}

Figure~\ref{fig:1b1c} is the analogue of the plot on the
right side of Figure~\ref{fig:1a}, for 101 zeros and 501 zeros.
Note that in these examples $\lambda\sim \pi/2$.

\begin{figure}[htp]
\begin{center}
\scalebox{0.7}[0.7]{\includegraphics{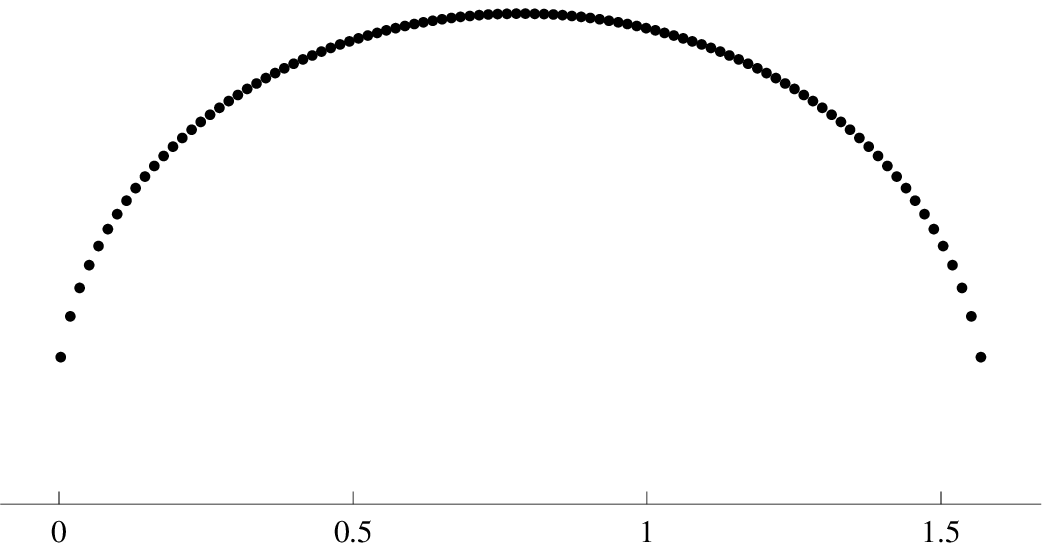}}
\hskip 0.5in
\scalebox{0.7}[0.7]{\includegraphics{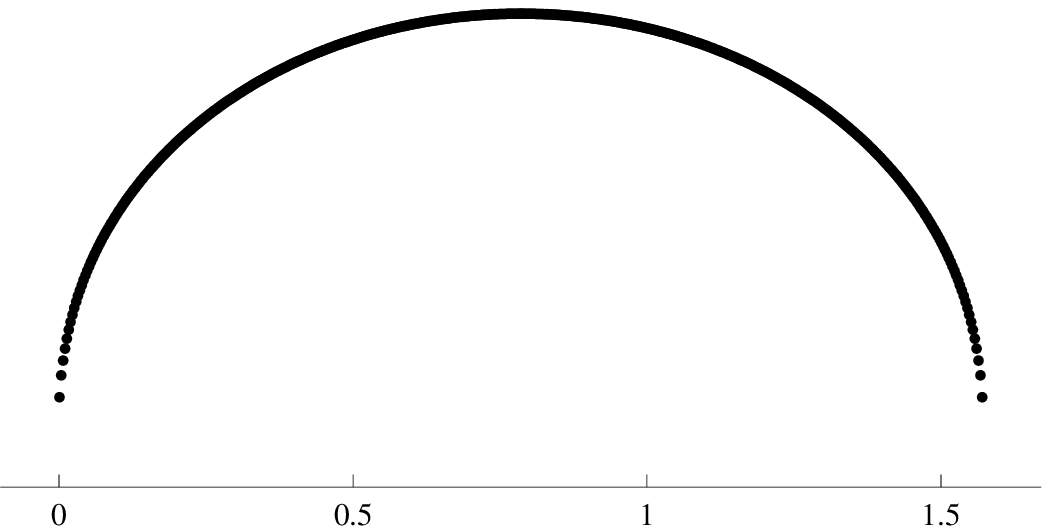}}
\caption{\sf
Unrolled and rescaled zeros of the derivative
of a polynomial with  zeros equally spaced along the arc
$\{e^{i\theta}\ :\ 0\le \theta\le \pi/2\}$.
The polynomial has degree
101 (left) and 501 (right).
} \label{fig:1b1c}
\end{center}
\end{figure}
In Figure~\ref{fig:1b1c} the vertical scales are stretched
by a factor of
$2 \pi N (1-r)$ where $N=101$ and $501$, respectively.

Figures~\ref{fig:1a} and \ref{fig:1b1c} illustrate that, with this
unrolling and rescaling, the zeros of the derivative approach
a circle.   
We see that even though $\lambda>0$ we have $\lambda'=0$,
but furthermore,
since the zeros lie on a (rescaled)
circle, we have $m'(\nu)\gg \nu^2$ as $\nu\to 0$.  Thus,
we can have $m'(\nu)>0$ for all $\nu>0$, yet $m(\nu)=0$ for
$\nu$ sufficiently small.

We believe that the above example is the limit of this
behavior, and we make the following conjecture, which we
view as a refinement of Soundararajan's conjecture.

\begin{conjecture}\label{conj:conjecture1} If $m'(\nu) \gg \nu^\alpha$ for some $\alpha<2$,
then $m(\nu)>0$ for all $\nu>0$.
\end{conjecture}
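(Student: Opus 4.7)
The plan is to prove the contrapositive: assume $m(\nu_0)=0$ for some $\nu_0>0$ and deduce $m'(\nu)\ll\nu^2$ as $\nu\to 0^+$, which contradicts $m'(\nu)\gg\nu^\alpha$ for every $\alpha<2$.

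The starting point is the identity obtained by setting $\zeta'(\rho')=0$ at $\rho'=\tfrac12+\delta+i\gamma'$ and invoking the explicit formula for $\zeta'/\zeta$. Taking real parts and using RH yields
\[
\sum_{\gamma}\frac{\delta}{\delta^2+(\gamma'-\gamma)^2}\;=\;\tfrac12\log\tfrac{\gamma'}{2\pi}+O(1).
\]
A quantitative sharpening of Theorem~\ref{thm:ki} should translate this into the statement that $\lambda_j'=\delta\log\gamma'\le\nu$ forces $|M(\gamma_k)|\ge c\log\gamma_k/\nu$ for some zero $\gamma_k$ within $O(1/\log\gamma')$ of $\gamma'$; this is the quantitative content of Ki's characterization.

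Under the hypothesis $m(\nu_0)=0$, for a density-one subset of $k$ both neighbouring gaps exceed $\nu_0/\log\gamma_k$. The two nearest neighbours of $\gamma_k$ then contribute at most $2\log\gamma_k/\nu_0$ in absolute value to $M(\gamma_k)$, so for $\nu$ sufficiently small relative to $\nu_0$ the imbalance $|M(\gamma_k)|\ge c\log\gamma_k/\nu$ must be driven by farther zeros---the ``global imbalance'' regime flagged by Ki. To bound the density of such $\gamma_k$ I would appeal to a second-moment estimate of the form
\[
\sum_{\gamma_k\in[T,2T]}M(\gamma_k)^2\;\ll\;T\log^3 T,
\]
derivable from the classical mean-square bound $\int_T^{2T}|(\zeta'/\zeta)(\tfrac12+\tfrac{1}{\log T}+it)|^2\,dt\ll T\log^2 T$ together with a local averaging argument transferring continuous means to sums over zeros. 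Chebyshev's inequality then produces $\#\{\gamma_k\in[T,2T]:|M(\gamma_k)|\ge L\}\ll T\log^3 T/L^2$, and inserting $L\asymp\log T/\nu$ yields the desired $m'(\nu)\ll\nu^2$.

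The principal obstacle is converting the average mean-square estimate for $\zeta'/\zeta$ into a genuinely pointwise second moment over zeros without losing more than a polylogarithmic factor; a naive transfer via Cauchy--Schwarz appears insufficient, and one likely needs a local Laurent expansion of $\zeta'/\zeta$ around each $\gamma_k$ to handle the removal of the pole. A second hurdle is extracting from Ki's argument the sharp quantitative correspondence $\nu\leftrightarrow 1/L$, since Theorem~\ref{thm:ki} as stated is purely qualitative and the implicit constants must be tracked. If both hurdles are cleared the conjecture follows; if only a weaker bound $m'(\nu)\ll\nu^\beta$ with some $\beta<2$ is obtained, the argument would still establish Conjecture~\ref{conj:conjecture1} under the stronger hypothesis $m'(\nu)\gg\nu^\alpha$ for $\alpha<\beta$, constituting partial progress toward Soundararajan's conjecture.
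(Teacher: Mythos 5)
You have attempted to prove a statement that the paper itself does not prove: Conjecture~\ref{conj:conjecture1} is stated as an open conjecture, motivated by the equally-spaced-zeros example of Section~\ref{sec:pictures} (which shows the exponent $2$ is the natural threshold), and the paper only establishes the weaker, conditional Theorem~\ref{t:la}. Moreover, the conjecture is intended to apply to general functions such as polynomials with all zeros on the unit circle, for which the paper explicitly notes that the analogue of the moment bound \eqref{eqn:zpzbound} fails; so an argument resting on mean values of $\zeta'/\zeta$ cannot establish the conjecture as stated, only (at best) a zeta-specific version.

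Even for the zeta function your argument has a decisive quantitative gap. The correct correspondence between $\beta'-\tfrac12$ and the nearest ordinate, derived in the paper's Case~3, is the two-sided bound \eqref{eqn:twosided}, $\beta'-\tfrac12\asymp(\gamma'-\gamma_c)^2\log\gamma'$; hence $\lambda'\le\nu$ forces $|M_{\gamma_c}|\asymp|\gamma'-\gamma_c|^{-1}\gg\log\gamma'/\sqrt{\nu}$, \emph{not} $\gg\log\gamma'/\nu$ as you claim. (This quadratic relation is exactly what produces the circles in Figures~\ref{fig:1a}--\ref{fig:1b1c}, the halving of the exponent $\beta\mapsto\beta/2$ in the Due\~nez et al.\ discussion, and the threshold $\alpha=2$ in the conjecture.) With the correct threshold $L\asymp\log T/\sqrt{\nu}$, your Chebyshev step with a second moment $\sum_{\gamma_k\le T}M(\gamma_k)^2\ll T\log^3T$ gives only $m'(\nu)\ll\nu$, which does not contradict $m'(\nu)\gg\nu^\alpha$ for $1\le\alpha<2$; a fixed $2k$-th moment handles only $\alpha<k$, and this is why the paper must take $k\asymp\nu^{-1/2}$ growing, track constants via Soundararajan's Lemma~\ref{lem:sound}, and even then obtains only the weaker conclusion \eqref{eqn:mainresult} with a $\logg T$ loss under a hypothesis with density $e^{-C(\nu)}$. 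In addition, your proposed moment estimate over zeros is itself unproven: transferring the continuous mean square to a sum over ordinates requires controlling $M_{\gamma_c}$ pointwise by $|\tfrac{\zeta'}{\zeta}(\tfrac12+\tfrac1{\log T}+it)|$ on short intervals, which in the paper needs the almost-all truncated explicit formula (Proposition~\ref{p:logdea}), the short-interval zero-count regularity (Proposition~\ref{p:dnumber}), and the well-spacing restriction of Lemma~\ref{lem:density}; knowing only that the two gaps adjacent to $\gamma_k$ exceed $\nu_0/\log\gamma_k$ does not control the ``global imbalance'' contribution of farther zeros. So the proposal, even if its technical hurdles were cleared, would fall short of the conjecture's range of $\alpha$ and of its intended generality.
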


We intend this as a general conjecture, applying
to the Riemann zeta function but also to other
cases such as a
sequence of polynomials with all zeros on the unit circle.

For applications to lower bounds of class numbers~\cite{M,CI}
one does not actually need $m(\nu)>0$ for $\nu < \pi $; it is sufficient
to show that a relatively small number of gaps between zeros
of the zeta function are small.  Our main result, Theorem~\ref{t:la},
obtains such bounds from estimates on the zeros of the derivative
of the zeta function.

Denote $ \log_{(2)}t=\log\log t $.

\begin{theorem}\label{t:la}
Assume RH. Suppose that for all $\nu>0$,
\begin{equation}
\#\{0<\gamma'<T:\left(\beta'-\tfrac{1}{2}\right)\log\gamma'\le\nu\}\geqslant
e^{-C(\nu)}T\log T\qquad(\nu>0,\,T\to\infty),
\end{equation}
as $T\to \infty$,
where $C(\nu)>0$ for $\nu>0$ with
$\lim_{\nu\to0^+}\sqrt{\nu}C(\nu)=0$ and
$\lim_{\nu\to0^+}C(\nu)=\infty$.
Then
\begin{equation}\label{eqn:mainresult}
\liminf_{T\to\infty}\frac{\#\{\gamma_n\le
T:(\gamma_{n+1}-\gamma_n)\log\gamma_n\le\nu\}}{T\log T/\logg
T}>0
\end{equation}
for all $\nu>0$.
\end{theorem}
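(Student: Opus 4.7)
The plan is to exploit the identity that holds at each zero $\rho' = \beta'+i\gamma'$ of $\zeta'$: starting from $\xi'/\xi(s) = B + \sum_\rho\bigl(\frac{1}{s-\rho}+\frac{1}{\rho}\bigr)$, evaluating at $s=\rho'$ (which satisfies $\zeta(\rho')\ne 0$ by RH, since a common zero of $\zeta$ and $\zeta'$ off the critical line would contradict RH), and taking real parts, one obtains
\[
\sum_{\gamma}\frac{\delta}{\delta^2 + (\gamma-\gamma')^2} \;=\; \tfrac{1}{2}\log\gamma' + O(1),\qquad \delta:=\beta'-\tfrac{1}{2}.
\]
Fix $\nu>0$ (the parameter in the conclusion) and choose $\nu'>0$ small depending on $\nu$, say $\nu' = (\nu/K)^2$ for a suitable constant $K$. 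The hypothesis gives a set $S(\nu',T)$ of size $\ge e^{-C(\nu')}T\log T$ consisting of zeros $\rho'$ with $\lambda'(\rho')\le\nu'$, so for each such $\rho'$ we have $\delta\le \nu'/\log\gamma'$.

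For each $\rho' \in S(\nu',T)$, I would establish a pointwise dichotomy from the identity. The contribution of zeros with $|\gamma-\gamma'|>1/\log\gamma'$ is only $O(\nu'\log\gamma')$, so the $O(1)$ zeros $\gamma$ within $1/\log\gamma'$ of $\gamma'$ must supply the bulk $\tfrac{1}{2}\log\gamma'+O(1)$. Bounding each individual term by $1/\delta=\log\gamma'/\nu'$ and matching the required total, one shows: either (A) there exist zeros $\gamma_m\le\gamma'\le\gamma_n$ of $\zeta$ with $(\gamma_n-\gamma_m)\log\gamma'\le K\sqrt{\nu'}$, or (B) the sum must be built from a genuine imbalance in the distribution of zeros on the two sides of $\gamma'$, which, in view of the quantitative content of Theorem~\ref{thm:ki}, forces $|M(\gamma_j)|\gg \log\gamma_j\log\log\gamma_j$ for some $\gamma_j$ within $O(1/\log\gamma')$ of $\gamma'$.

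The next step is a counting argument. A second-moment estimate for $M(\gamma_j)$, obtained by expanding the square and reducing to standard pair-correlation bounds for zeros of $\zeta$, caps the number of $\gamma_j\le T$ with $|M(\gamma_j)|\gg\log\gamma_j\log\log\gamma_j$ at $O(T\log T/(\log\log T)^2)$. Since each such $\gamma_j$ lies within $O(1/\log T)$ of at most $O(1)$ zeros $\rho'$, the case (B) contribution to $|S(\nu',T)|$ is at most $O(T\log T/(\log\log T)^2)$. The condition $\lim_{\nu\to 0^+}\sqrt{\nu}\,C(\nu)=0$ guarantees that, for the fixed $\nu'$ we chose, $e^{-C(\nu')}$ is a positive constant (independent of $T$) which eventually dominates $(\log\log T)^{-2}$; hence at least $\tfrac{1}{2}|S(\nu',T)|\ge\tfrac{1}{2}e^{-C(\nu')}T\log T$ zeros $\rho'$ are of type (A).

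Each case (A) $\rho'$ yields a pair of zeros of $\zeta$ at normalized distance $\le K\sqrt{\nu'}\le\nu$; inside any such interval there are $O(1)$ zeros, so at least one consecutive gap with $\lambda_n\le\nu$ is produced. A residual $\log\log T$ loss enters because several $\rho'$'s can be attributed to the same consecutive close gap: a dyadic decomposition of scales caps this multiplicity at $O(\log\log T)$, yielding the conclusion $\#\{\gamma_n\le T:\lambda_n\le\nu\}\gg T\log T/\log\log T$. The main obstacle is the pointwise dichotomy, especially rendering the imbalance alternative~(B) quantitative enough that the second-moment bound really absorbs it; the hypothesis is exactly calibrated for this, since the appearance of $\sqrt{\nu}$ reflects the heuristic $\lambda_n \sim 2\sqrt{\lambda'}$ at a close pair, and $\sqrt{\nu}\,C(\nu)\to 0$ is precisely the threshold above which a density version of the Garaev--Y{\i}ld{\i}r{\i}m argument can succeed.
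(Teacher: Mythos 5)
There is a genuine gap, and it lies exactly at the two places you flag as the ``main obstacle.'' First, the dichotomy is miscalibrated: if there is no pair of zeros of $\zeta$ at normalized distance $\le K\sqrt{\nu'}$ near $\gamma'$, the identity does \emph{not} force $|M(\gamma_j)|\gg\log\gamma_j\logg\gamma_j$. What it forces (this is the paper's Case~3 analysis) is only $|\gamma'-\gamma_c|\gg\sqrt{(\beta'-\tfrac12)/\log\gamma'}$ and hence $|M_{\gamma_c}|\gg\log\gamma'/\sqrt{\nu'}$, which for fixed $\nu'$ is far smaller than $\log\gamma'\logg\gamma'$; Theorem~\ref{thm:ki} contains no statement that would upgrade an ``imbalance'' to size $\log\logg$, and the introduction's $\log T\logg T$ figure is an upper extreme, not a lower bound produced by case~(B). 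With the correct threshold $\log\gamma'/\sqrt{\nu'}$, your second-moment plan collapses quantitatively: even an optimal bound $\sum_{\gamma_j\le T}|M(\gamma_j)|^2\ll T\log^3T$ only caps the exceptional count at $\approx\nu'\,T\log T$, while the hypothesis guarantees merely $e^{-C(\nu')}T\log T$ zeros of $\zeta'$, and since $C(\nu)$ is allowed to grow almost as fast as $\nu^{-1/2}$ one can have $e^{-C(\nu')}\ll\nu'$; the exceptional set then swallows the whole guaranteed set. This is precisely why the paper takes $2k$-th moments with $k\asymp\nu^{-1/2}$ (via Lemma~\ref{l:moment}, the prime-sum approximation of Lemma~\ref{lem:zpzapprox}, and Soundararajan's Lemma~\ref{lem:sound}), giving an exceptional count $\ll T\log T\,(Ak\sqrt{\nu})^{2k}\approx T\log T\,e^{-c/\sqrt{\nu}}$, against which the condition $\sqrt{\nu}\,C(\nu)\to0$ is exactly what is needed. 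Tellingly, in your argument that condition is never genuinely used (for fixed $\nu'$ any positive constant beats $(\logg T)^{-2}$), which signals that the calibration is wrong.

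Second, even setting aside the threshold, the inputs you treat as routine are not. The claimed bound $\sum_{\gamma_j\le T}|M(\gamma_j)|^2$ ``by standard pair-correlation bounds'' is not standard: expanding the square produces triple sums over zeros with sign-sensitive singular weights $1/((\gamma_j-\gamma_n)(\gamma_j-\gamma_m))$, and no unconditional or RH-conditional estimate of the required strength is available; the paper deliberately routes all moment estimates through $\frac{\zeta'}{\zeta}(\tfrac12+\tfrac1{\log T}+it)$ and a short Dirichlet polynomial over primes instead. Moreover, your estimate that the zeros with $|\gamma-\gamma'|>1/\log\gamma'$ contribute only $O(\nu'\log\gamma')$ requires regular zero counts in windows of length a few mean spacings around $\gamma'$; unconditionally on RH one only gets $O(\nu'\log^2\gamma'/\logg\gamma')$, and the needed regularity holds only for \emph{almost all} ordinates. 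Supplying it (together with the short-sum representation of $\zeta'/\zeta$ near almost all zeros) is the content of Lemma~\ref{lem:density}, Proposition~\ref{p:dnumber} (via Tsang's moments of $S(t)$), and Proposition~\ref{p:logdea}, and it is also the source of the $\logg T$ loss in \eqref{eqn:mainresult}; your dyadic ``multiplicity'' accounting is not where that loss comes from. To repair the proof you would need to replace the second-moment step by a growing-moment argument of the paper's type and add the restriction to well-spaced, regularly counted zeros before any of the pointwise estimates.
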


The conclusion of the theorem is weaker than $m(\nu)>0$
for $\nu>0$, but only by a factor of $\logg T$.  Thus, it is more
than sufficient to apply the results of Conrey and Iwaniec~\cite{CI}.
In particular, Theorem~\ref{t:la} shows that it is possible to
obtain lower bounds for class numbers of imaginary quadratic fields
from knowledge of the density of zeros of the derivative
of the Riemann zeta function.

There is an apparent discrepancy between Conjecture~\ref{conj:conjecture1}
and Theorem~\ref{t:la} which we wish to clarify.  In Theorem~\ref{t:la}
we allow exponential decrease of $m'(\nu)$ as $\nu\to 0$.  While
the conclusion of the theorem is weaker than $m(\nu)>0$ by a
factor of $\logg T$, it may seem curious that the condition in
Conjecture~\ref{conj:conjecture1} requires  $m'(\nu)$ 
to be relatively large as $\nu\to 0$.  Indeed, the examples
in Section~\ref{sec:pictures} show that the condition in
Conjecture~\ref{conj:conjecture1} cannot be improved for
general functions.

The reason for the apparent inconsistency is that, as described
in Section~\ref{sec:boundMgammac}, our method relies on a bound
on the moments of the logarithmic derivative.  For the Riemann
zeta function one expects
\begin{equation}\label{eqn:zpzbound}
\int_{T}^{2T} \left|
  \frac{\zeta'}{\zeta}\left(\frac12 + \frac{1}{\log T} + i t\right)
\right|^{2k} dt \ll_k T\log^{2k}T.
\end{equation}
The bound \eqref{eqn:zpzbound} should follow by the method
of Selberg~\cite{Sel},
although we give a conditional proof that allows us to
explicitly determine the implied constant.
Such a bound, for one fixed~$k$,
would establish a weaker version of Theorem~\ref{t:la} that 
required $m'(\nu)\gg \nu^{-2 k}$.  However, more general functions
like the polynomials in Section~\ref{sec:pictures}
do not satisfy an analogous bound to \eqref{eqn:zpzbound}.  In fact, they are
very large on the unit circle  and do not satisfy the analogue
of the Lindel\"of hypothesis.
Conjecture~\ref{conj:conjecture1} is intended to cover those
more general cases, while stronger statements should be true
for the zeta function.

It is interesting to speculate on the precise nature of the
function $m'(\nu)$ for the Riemann zeta function.
Due\~nez~{\it et. al.}~\cite{Due}
give a detailed analysis of the relationship between small gaps
between zeros of the zeta function (and analogously for zeros of
the characteristic polynomial of a random unitary matrix) and
the zeros of the derivative which arise from the small gaps.
For the case of the Riemann zeta function they indicate that
the random matrix conjectures for the zeros of the zeta function
should imply
\begin{equation}
m_\zeta'(\nu) \sim \frac{8}{9\pi} \nu^{\frac{3}{2}},
\end{equation}
as conjectured by Mezzadri~\cite{Mez03}.
That calculation is based on a more general result which suggests
that if $m(\nu)\sim \kappa \nu^\beta$ then $m'(\nu) \sim \kappa' \nu^{\beta/2}$
where 
\begin{equation}
\kappa'=2\pi \frac{\kappa}{\beta} \left(\frac{2}{\pi}\right)^\beta.
\end{equation}
The factor of $2\pi$ comes from a different normalization used in~\cite{Due}
and here we work with the cumulative distribution functions
$m$ and $m'$, while in~\cite{Due} they use density functions.
That derivation assumed that zeros of $\zeta'$
close to the $\tfrac12$-line only arise from closely spaced
zeros of the zeta-function. The discussion above shows that,
without further knowledge of the zeros, this is not a valid
assumption.  But, as indicated in our Conjecture~\ref{conj:conjecture1},
if $\beta<4$ then we believe that the almost all zeros close
to the $\tfrac12$-line do arise in such a manner.  The 
random matrix prediction for the neighbor spacing
of zeros of the zeta-function has $\kappa=\pi/6$
and $\beta=3$, which is covered by Conjecture~\ref{conj:conjecture1}.
So our results support the analysis of Due\~nez~{\it et. al.}~\cite{Due}.

The remainder of this paper is devoted to the proof of Theorem~\ref{t:la}.

\section{Proof of Theorem~\ref{t:la}}

Theorem~\ref{t:la} says that sufficiently many zeros of $\zeta'$ close to the
$\frac12$-line
can only arise from closely spaced zeros of the zeta-function.
If $\rho'=\beta'+i\gamma'$ is a zero of $\zeta'$, then we denote
by $\rho_c=\frac12+i\gamma_c$ the zero of the zeta-function which is
closest to~$\rho'$. 
Thus, we must show that if there are many
$\beta'$ very close to $\tfrac12$, then often there is another
zero of the zeta-function close to~$\gamma_c$.

Our approach involves a study of the quantity
\begin{equation}
M_{\gamma_c}=\sum_{0<|\gamma-\gamma_c|\le
X(\gamma_c)} 
\frac{1}{\gamma-\gamma'},
\end{equation}
where the range in the sum, $X(\gamma_c)$, turns out to be a limiting
factor in our method.  By analogy to a similar quantity studied in~\cite{K},
we expect that $M_{\gamma_c}$ should be large if and only if
$\beta'-\frac12$ is small.  And just like in~\cite{K},
there are two ways that $M_{\gamma_c}$ can be large.  There could be
an individual term which is large.  That would happen if $\gamma'$ was near
two $\gamma$s that are very close together.  Or there could be a large
imbalance in the the distribution of the $\gamma$s, 
for example if there was an unusually large
gap between $\gamma_c$ and one of the adjacent zeros.  We must show that the
second possibility cannot occur too often.
This is accomplished by showing that an imbalance in the distribution
of zeros causes the zeta function to be large, and bounds on moments of the
zeta function show that this cannot happen too often. 

The proof involves two steps.  Assume the zeros of the
zeta function rarely get close together.  First we show that if
$\beta'-\frac12$ is small then $M_{\gamma_c}$ is large.
Second,  we show that if  $M_{\gamma_c}$ is large then
usually $\frac{\zeta'}{\zeta}(s)$
is large near $\tfrac12+i\gamma'$, subject to our assumption
that the zeros of the zeta function rarely get close together.
Standard bounds for the
moments of~$\frac{\zeta'}{\zeta}(\sigma+i t)$ let us conclude
that $\beta'-\frac12$ cannot be small too often, which is
what we wanted to prove.

The relationship between $M_{\gamma_c}$ and  $\zeta'/\zeta$
relies on an estimate for $\zeta'/\zeta$ in terms of a short sum
over zeros.  Suppose we have
\begin{equation}\label{eqn:logderiv1}
\frac{\zeta'}{\zeta}(s) = \sum_{|\gamma-t|< X(T)} \frac{1}{s-\rho} + O(\log T).
\end{equation}
On RH, with $X(t)=1/\logg T$ the above holds for all~$t$~\cite{Ti}.
Using this, instead
of our~\eqref{e:logdea} below, leads to a weaker version of Theorem~\ref{t:la},
where the $\logg T$ in the denominator of \eqref{eqn:mainresult} is replaced by~$\log T$.

We prove the following strengthening of~\eqref{eqn:logderiv1},
but only near almost all~$\gamma$.

\begin{proposition}\label{p:logdea}
Assume RH. Let $m_0$ be a positive integer.
If $C^*>1$ is sufficiently large, then the number of $\gamma_n<T$ such
that
\begin{equation}\label{e:logdea}
\frac{\zeta'}{\zeta}(s)=
\sum_{|\gamma-t|\le
\frac{C^*\logg\gamma}{\log\gamma}}\frac{1}{s-\rho}
+
O(\log\gamma_n)
\end{equation}
for $s=1/2+1/\log\gamma_n+it$ with $t\geqslant10$ and
$|\gamma_n-t|\le A/\log\gamma_n$ is
\begin{equation}
\frac{T}{2\pi}\log T+O\left(\frac{T}{(\log T)^{m_0}}\right)
\end{equation}
as $T\to\infty$.
\end{proposition}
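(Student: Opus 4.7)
The plan is to start from the standard RH truncation
\[
\frac{\zeta'}{\zeta}(s) = \sum_{|\gamma - t| \le 1} \frac{1}{s - \rho} + O(\log t),
\]
applied at $s = 1/2 + 1/\log\gamma_n + it$. Setting $X := C^*\logg\gamma_n/\log\gamma_n$ and splitting at $|\gamma - t| = X$, the proposition reduces to showing that the tail
\[
\Sigma(t) := \sum_{X < |\gamma - t| \le 1}\frac{1}{s - \rho}
\]
is $O(\log\gamma_n)$ uniformly for $|t - \gamma_n| \le A/\log\gamma_n$, outside a set of $\gamma_n \le T$ of size $O(T/(\log T)^{m_0})$. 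Writing $s - \rho = 1/\log\gamma_n + i(t - \gamma)$ and using $|t - \gamma| \ge X \gg 1/\log\gamma_n$ throughout, the real part is non-negative and
\[
\mathrm{Re}\,\Sigma(t) \le \frac{1}{\log\gamma_n}\sum_{X < |\gamma - t| \le 1}\frac{1}{(t-\gamma)^2} \ll \frac{1}{X} = o(\log\gamma_n)
\]
via a dyadic decomposition with Riemann--von Mangoldt, after excluding a thin set of $\gamma_n$ with anomalously clumped nearby zeros, controlled by a second-moment bound on zero counts in short intervals.

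The substantive work is the imaginary part. Rather than handle it termwise, I would write
\[
\Sigma(t) = \frac{\zeta'}{\zeta}(s) - L(t) + O(\log\gamma_n), \qquad L(t) := \sum_{|\gamma - t|\le X}\frac{1}{s - \rho},
\]
and estimate $L(t)$ and $\zeta'/\zeta(s)$ separately. For $L(t)$, the bound $|1/(s-\rho)| \le \log\gamma_n$ gives $|L(t)| \le \log\gamma_n \cdot \#\{\gamma : |\gamma - t|\le X\}$; a pair-correlation or second moment argument removes those $\gamma_n$ with too many nearby zeros and leaves $|L(t)| = O(\log\gamma_n)$ on the complement. For $\zeta'/\zeta(s)$, invoke the moment bound \eqref{eqn:zpzbound}: by Chebyshev, for any integer $k$,
\[
\mathrm{meas}\bigl\{\,t \in [T,2T] : |(\zeta'/\zeta)(1/2 + 1/\log T + it)| > V \,\bigr\} \ll_k T(\log T)^{2k}V^{-2k}.
\]
A Sobolev-type maximal bound lets us replace the supremum of $|\zeta'/\zeta|$ over the length-$A/\log\gamma_n$ window by values at finitely many sample points, and Riemann--von Mangoldt converts the measure bound into a count of exceptional $\gamma_n$.

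The main obstacle is the $\log T$ loss when passing from a Lebesgue measure bound on bad $t$ to a count of exceptional zeros $\gamma_n$, since zeros have density $\log T/(2\pi)$ near height $T$. To arrange an exceptional set of size $O(T/(\log T)^{m_0})$ one must choose both $k$ and the threshold $V$ depending on $m_0$, absorbing the implied constant in \eqref{eqn:zpzbound}; the proposition allows its implicit constants to depend on $m_0$, which accommodates this choice. A finer point is that $\zeta'/\zeta(s)$ is singular near each $\gamma_n$ whereas $\Sigma(t)$ is not, so the bound on $\Sigma$ rests on the cancellation of singularities between $\zeta'/\zeta(s)$ and $L(t)$; in the bookkeeping one applies both estimates only on the complement of a common exceptional set. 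A union bound over the exceptional sets produced by the $\mathrm{Re}\,\Sigma$ estimate, the $L$ estimate, and the $\zeta'/\zeta$ moment estimate completes the argument.
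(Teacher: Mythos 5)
Your reduction to bounding the tail $\Sigma(t)=\sum_{X<|\gamma-t|\le 1}(s-\rho)^{-1}$ is the same starting point as the paper, and the real-part estimate is essentially fine (though note that to reach an exceptional set of size $O(T/(\log T)^{m_0})$ you need $2k$-th moments of zero counts in short intervals with $k\asymp\logg T$, as in Proposition~\ref{p:dnumber} via Tsang's theorem; a second-moment bound only gives exceptional sets of relative density a power of $1/\logg T$). The genuine gap is in your treatment of the imaginary part. First, the bound $|L(t)|\le\log\gamma_n\cdot\#\{\gamma:|\gamma-t|\le X\}$ cannot be salvaged by excising clumped zeros: the window has length $\asymp\logg\gamma_n/\log\gamma_n$ and therefore contains $\asymp\logg\gamma_n$ zeros for \emph{typical} $\gamma_n$, so the trivial bound is $\asymp\log T\logg T$ generically; the needed $O(\log T)$ bound requires cancellation between zeros above and below $t$, which a termwise estimate ignores. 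Second, the appeal to \eqref{eqn:zpzbound} cannot produce $|\zeta'/\zeta|=O(\log T)$ outside a set of relative measure $(\log T)^{-m_0-1}$: Chebyshev with fixed $k$ only yields the threshold $V\asymp(\log T)^{1+(m_0+1)/(2k)}$, and letting $k$ grow with $T$ runs into the $k$-dependence of the implied constant (Selberg/Soundararajan-type arguments give factors like $k!$ or $k^{2k}$, which with $k\asymp\logg T$ force $V\gg\log T\logg T$). Moreover, within this paper \eqref{eqn:zpzbound} is not an available input at this stage: the moment bound is only obtained (Lemmas~\ref{l:moment}--\ref{lem:sound}) under the well-spacing hypotheses and the short-sum representation that Proposition~\ref{p:logdea} is meant to supply, so your route is circular relative to the paper's architecture, and no unconditional proof of \eqref{eqn:zpzbound} is given here.

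The paper's proof avoids both difficulties by bounding the tail deterministically for every $\gamma_n$ that satisfies the zero-counting regularity of Proposition~\ref{p:dnumber}: writing $\gamma_{m_1}$, $\gamma_{m_2}$ for the nearest zeros just outside the window on either side, it pairs $\gamma_{m_1-k}$ with $\gamma_{m_2+k}$ and uses the consequence $|2\gamma_n-\gamma_{m_1-k}-\gamma_{m_2+k}|\ll\logg T/\log T$ of Proposition~\ref{p:dnumber} to exhibit the two-sided cancellation, giving $\sum_{X<|\gamma_n-\gamma_m|\le 1}(\gamma_n-\gamma_m)^{-1}=O(\log T)$ directly; the exceptional set of $\gamma_n$ comes entirely from Proposition~\ref{p:dnumber}, i.e., from high moments of $S(t+h)-S(t)$, and no bound on $\zeta'/\zeta$ itself is used. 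To repair your argument you would need to replace the separate estimates of $L(t)$ and $\zeta'/\zeta(s)$ by some mechanism capturing this symmetric cancellation, which is exactly the missing idea.
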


The proof of~Proposition~\ref{p:logdea} is in Section~\ref{sec:logdea}.

\subsection{Restricting to zeros with special properties}
We begin the proof of Theorem~\ref{t:la}.
The lemmas in this section show that, in the context of the proof
of Theorem~\ref{t:la}, we only have to deal with zeros that are
well spaced.

Suppose, for the purposes of contradiction, that there exists
$\epsilon>0$ so that
\begin{equation}\label{eqn:nosmallgaps}
\liminf_{T\to\infty}\frac{\#\{\gamma_n\le
T:\gamma_{n+1}-\gamma_n\le\epsilon/\log\gamma_n\}}{T\log
T/\logg T}=0.
\end{equation}
Then, we can find a sequence $\langle T_l\rangle$ such that $T_1$ is
sufficiently large, $T_l\to\infty$ and
\begin{equation}
\#\{\gamma_n\le
T_l:\gamma_{n+1}-\gamma_n\le\epsilon/\log\gamma_n\}=o\left(T_l\log
T_l/\logg T_l\right)
\end{equation}
as $l\to\infty$. We set
\begin{equation}
T=T_l.  
\end{equation}

The following lemma shows that we can restrict our attention to those
zeros whose immediate neighbors are well spaced.

\begin{lemma}\label{lem:density}  Let $K=4C^*\left[\logg T\right]$.  
Under assumption~\eqref{eqn:nosmallgaps} we have
\begin{equation}\label{e:density}
\# \{\gamma_n<T:0<|m|\le
K,\,|\gamma_{n+m}-\gamma_{n+m-1}|\geqslant\frac{\epsilon}{2\log\gamma_n}\}=\frac{T}{2\pi}\log
T(1+o(1)).
\end{equation}
\end{lemma}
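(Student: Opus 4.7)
The plan is to bound the number of ``bad'' $\gamma_n$ --- those for which some gap between zeros indexed within $K$ of $n$ is small --- and to show that this count is $o(T\log T)$ along the subsequence $T=T_l$, so the remaining ``good'' $\gamma_n$ account for essentially all zeros up to $T$.

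First I would observe that $\gamma_n$ fails the stated condition exactly when there exists $m$ with $0<|m|\le K$ and $\gamma_{n+m}-\gamma_{n+m-1}<\epsilon/(2\log\gamma_n)$; call such $n$ \emph{bad}. Since $K=4C^*[\logg T]$ grows very slowly, the Riemann--von Mangoldt formula gives $\gamma_{n+K}-\gamma_n\le 2\pi K/\log\gamma_n+O(1)=o(1)$ as $\gamma_n\to\infty$, hence $\log\gamma_{n+j}=\log\gamma_n(1+o(1))$ uniformly for $|j|\le K$. In particular, for $T$ large we have $\epsilon/(2\log\gamma_n)\le\epsilon/\log\gamma_{n+m-1}$, so a bad $\gamma_n$ always witnesses a pair of consecutive zeros $(\gamma_k,\gamma_{k+1})$, $k=n+m-1$, satisfying $\gamma_{k+1}-\gamma_k\le\epsilon/\log\gamma_k$ --- precisely the quantity counted in the hypothesis~\eqref{eqn:nosmallgaps}.

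A union bound then finishes the argument: each such ``small gap'' $(\gamma_k,\gamma_{k+1})$ can make at most $2K+O(1)$ distinct $n$ bad (namely those with $|n-k|\le K$), so along the subsequence $T=T_l$,
\begin{equation*}
\#\{\text{bad }\gamma_n<T\}\ll K\cdot\#\{\gamma_k\le T+O(1):\gamma_{k+1}-\gamma_k\le\epsilon/\log\gamma_k\}=O(\logg T)\cdot o\!\left(\frac{T\log T}{\logg T}\right)=o(T\log T).
\end{equation*}

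Combining with $N(T)=\frac{T}{2\pi}\log T(1+o(1))$ and subtracting the bad count yields~\eqref{e:density}. The only mildly delicate point is the uniform comparison of $\log\gamma_n$ with $\log\gamma_{n+m}$ for $|m|\le K$, which is routine because $K/\log T\to 0$. I do not expect any real obstacle here --- the lemma is essentially a quantitative consequence of the standing hypothesis~\eqref{eqn:nosmallgaps} via a local union bound.
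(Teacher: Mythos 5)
Your argument is correct and is essentially the paper's proof in different clothing: the paper defines, for each shift $m$ with $0<|m|\le K$, the set $A_m$ of $\gamma_n<T$ whose $m$-th nearby gap is $\geq \epsilon/(2\log\gamma_n)$, notes that assumption~\eqref{eqn:nosmallgaps} makes each complement of size $o(T\log T/\logg T)$, and lower-bounds $\#\bigl(\bigcap_{0<|m|\le K}A_m\bigr)$ by exactly the union bound you use, with the same loss $K\cdot o(T\log T/\logg T)=o(T\log T)$ coming from $K\asymp\logg T$. Your explicit remark about comparing $\log\gamma_n$ with $\log\gamma_{n+m-1}$ (absorbed by the factor $2$ in the threshold) is the only delicate point, and it is handled the same way implicitly in the paper.
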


\begin{proof}
For each $m=\pm1,\pm2,\ldots$, let
\begin{equation}
A_m=\{\gamma_n<T:|\gamma_{n+m}-\gamma_{n+m-1}|\geqslant\frac{\epsilon}{2\log\gamma_n}\}.
\end{equation}
Here, we exclude the case $n+m\le1$.
By assumption~\eqref{eqn:nosmallgaps} have
\begin{equation}
\#(A_m)=\frac{T}{2\pi}\log T+o\left(\frac{T\log T}{\logg T}\right)
\end{equation}
for $0<|m|\le\log T$.
We see that
\begin{equation}
\aligned \#\left(\bigcap_{0<|m|\le
K}A_m\right)=&\sum_{0<|m|\le
K}\#(A_m)-\sum_{\substack{-K\le
m<K\\ m\not=0}}\#\left(A_m\cup\bigcap_{\substack{m<l\le K\\
l\not=0}}A_l\right)\\
\geqslant&2K\frac{T}{2\pi}\log T+o\left(\frac{KT\log T}{\logg
T}\right)-(2K-1)\frac{T}{2\pi}\log
T+O(K\log T)\\
=&\frac{T}{2\pi}\log T+o(T\log T).
\endaligned
\end{equation}

\end{proof}

The next Proposition shows that we can restrict to intervals
where the number of
zeros is close to its average.
Fix $C^*>1$, let $l_1$ and $l_2$ be integers, and for
$\tfrac12+i\gamma$ a zero of the zeta function set
\begin{equation}
N(\gamma,l_1,l_2)=
N\left(\gamma+\frac{l_2C^*\logg\gamma}{\log\gamma}\right)
-N\left(\gamma+\frac{l_1C^*\logg\gamma}{\log\gamma}\right)
-\frac{(l_2-l_1)C^*\logg\gamma}{2\pi}
\end{equation}
Using an argument in \cite{Iv}, we get the following.
\begin{proposition}\label{p:dnumber} Let $m_0>0$. There exists $C>0$
such that the
number of $\gamma_n<T$ with
\begin{equation}
N(\gamma_n,l_1,l_2)\le C\logg T
\end{equation}
is
\begin{equation}
\frac{T}{2\pi}\log T+O\left(\frac{T}{(\log T)^{m_0}}\right)\qquad(T\to\infty),
\end{equation}
provided that
$|l_1|,|l_2|\le\log T/(C^*\logg T)$ and
$0<l_2-l_1\le2\log T/(C^*\logg T)$.
\end{proposition}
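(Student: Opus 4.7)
The strategy is to translate $N(\gamma_n,l_1,l_2)$ into an increment of Selberg's function $S(t):=\tfrac{1}{\pi}\arg\zeta(\tfrac{1}{2}+it)$, apply Selberg's moment estimates for $S$ on RH, and then count the exceptional zeros via Chebyshev's inequality after converting integral moments to discrete sums over zeros. Set $h_i:=l_iC^*\logg\gamma_n/\log\gamma_n$, so that $|h_1|,|h_2|\ll 1$ uniformly for $\gamma_n\asymp T$. The Riemann--von Mangoldt formula $N(t)=\tfrac{t}{2\pi}\log(t/2\pi e)+\tfrac{7}{8}+S(t)+O(1/t)$ together with a Taylor expansion of the smooth main term at $t=\gamma_n$ yields
\[
N(\gamma_n+h_2)-N(\gamma_n+h_1)-\tfrac{(l_2-l_1)C^*\logg\gamma_n}{2\pi}=S(\gamma_n+h_2)-S(\gamma_n+h_1)+O(1),
\]
so $N(\gamma_n,l_1,l_2)>C\logg T$ forces $|S(\gamma_n+h_2)-S(\gamma_n+h_1)|>C\logg T-O(1)$. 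By dyadic summation it is enough to bound the count of such exceptional zeros in $(T/2,T]$ by $O(T/(\log T)^{m_0})$.

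The key analytic input---this is where the argument of \cite{Iv} enters---is Selberg's $2k$-th moment bound, valid on RH for each positive integer $k$:
\[
\int_T^{2T}\bigl|S(t+h_2)-S(t+h_1)\bigr|^{2k}\,dt\ll\frac{(2k)!}{2^k k!}\,T\,\bigl(\log(2+(h_2-h_1)\log T)\bigr)^k\ll\Bigl(\tfrac{2k\logg T}{e}\Bigr)^k T,
\]
the last step using Stirling and that $(h_2-h_1)\log T\ll\logg T$ in the allowed range. To pass to the discrete moment $\sum_{T<\gamma_n\le 2T}|S(\gamma_n+h_2)-S(\gamma_n+h_1)|^{2k}$ I would partition $[T,2T]$ into intervals of length $1/\log T$, each of which contains $O(1)$ zeros on RH and on which the integrand varies by only an $O(1)$ factor (since $S$ has unit jumps at zeros). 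This costs a factor of $\log T$, giving the discrete bound $\ll(2k\logg T/e)^k T\log T$.

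Chebyshev's inequality then bounds the number of exceptional zeros in $(T/2,T]$ by
\[
\frac{(2k\logg T/e)^k\,T\log T}{(C\logg T-O(1))^{2k}}\ll T\log T\cdot\Bigl(\tfrac{2k}{eC^2\logg T}\Bigr)^{k};
\]
choosing $k:=\lceil C^2\logg T/2\rceil$ makes the parenthesised factor equal to $e^{-k}=(\log T)^{-C^2/2}$, yielding a bound of order $T(\log T)^{1-C^2/2}$. Any $C\ge\sqrt{2(m_0+1)}$ then completes the argument.

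The main obstacle is that $k$ must be allowed to grow with $T$ (of order $\logg T$) in order to beat an arbitrary power of $\log T$; this forces one to use Selberg's moment bound with explicit control of its $k$-dependence, rather than the standard fixed-$k$ form. A secondary issue is that $h_i$ depends (weakly) on $\gamma_n$; within a single dyadic range $\gamma_n\in[T,2T]$, the dependence is mild and can be absorbed, but it must be tracked. The passage from continuous to discrete moments is also somewhat delicate due to the jumps of $S$, but is handled by the standard bound on the number of zeros in short intervals.
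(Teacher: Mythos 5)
Your overall strategy (convert $N(\gamma_n,l_1,l_2)$ into an increment $S(\gamma_n+h_2)-S(\gamma_n+h_1)$, invoke the Selberg--Tsang moment bound with explicit $k$-dependence, take $k\asymp\logg T$, and finish by Chebyshev) is exactly the paper's, but your proof has a genuine gap at the continuous-to-discrete step, and it is precisely the step the paper spends its effort on. You assert that on RH each interval of length $1/\log T$ contains $O(1)$ zeros, so that the integrand $|S(t+h_2)-S(t+h_1)|^{2k}$ varies by only an $O(1)$ factor across such an interval. This is false: on RH the best one can say is $N(t+\delta)-N(t)\ll \delta\log T+\log T/\logg T$, so an interval of length $1/\log T$ (indeed of any length up to $1/\logg T$) may contain as many as $\asymp\log T/\logg T$ zeros. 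Consequently, as $t$ moves away from $\gamma_n$ inside your interval, $S(t+h_1)$ and $S(t+h_2)$ can each change by an additive amount of size $\log T/\logg T$, which dwarfs the threshold $C\logg T$; an ``exceptional'' zero therefore need not force the integrand to be large anywhere except at the single point $t=\gamma_n$, and the Chebyshev transfer from $\int|S(t+h_2)-S(t+h_1)|^{2k}\,dt$ to the count of exceptional $\gamma_n$ collapses. No choice of interval length repairs this on RH alone, since the $\log T/\logg T$ term does not shrink with $\delta$.

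This is why the paper first proves the auxiliary Claim: for all but $O(T/(\log T)^{m_0})$ zeros $\gamma_n$, \emph{every} interval of the form $[\gamma_n+(lC^*-1)\logg T/\log T,\ \gamma_n+lC^*\logg T/\log T]$ with $|l|\le\log T/(C^*\logg T)$ contains at most $C_1\logg T$ zeros. The Claim itself is proved by the same moment method, and there the transfer is harmless because a zero \emph{count} over a fixed interval is monotone under slightly enlarging the interval, so shifting $t$ only adds an acceptable $O(\logg T)$ to the drift; no cancellation or ``drop'' can occur. Having discarded that small exceptional set, the paper can shift $t$ by up to $\logg T/\log T$ at an additive cost $O(\logg T)$ (controlled by the Claim applied at the two endpoints $\gamma_n+h_1$, $\gamma_n+h_2$), which is negligible against the threshold $C\logg T$ once $C$ is taken large in terms of $C_1$; only then does the Chebyshev argument with $k=[\logg T]$ go through, and a final summation over the $O((\log T/\logg T)^2)$ admissible pairs $(l_1,l_2)$ gives the stated uniform conclusion. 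To fix your write-up you would need to insert an analogue of this Claim (or some other a priori control on zero counts in the translated short intervals near $\gamma_n+h_1$ and $\gamma_n+h_2$ for almost all $\gamma_n$), and also make the union bound over the pairs $(l_1,l_2)$ explicit, since the proposition is applied simultaneously in all of them.
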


The proof of Proposition~\ref{p:dnumber} is in Section~\ref{sec:dnumber}.

\subsection{Lower bound for $M_{\gamma_c}$}
Let $\beta'+i\gamma'$ be a zero of $\zeta'$, and (assuming RH)
let $\tfrac12+i\gamma_c$ be the zero of the zeta function
which is closest to~$i\gamma'$.  If there are two closest
zeros, choose the one nearer to the origin.
We will use the above lemmas to give a lower bound for  $M_{\gamma_c}$,
assuming $\beta'-\tfrac12$ is small.  

Let $Z(T)$ be the set of $\gamma_c < T$ which satisfy the following
three conditions:
\begin{align}
\gamma_c\in\mathstrut & \{\gamma_n<T :
	0<|m|\le K,\,
	|\gamma_{n+m}-\gamma_{n+m-1}|\geqslant
			\frac{\epsilon}{2\log\gamma_n}\};
\label{eqn:gammacset}\\
N(\gamma_c,l_1,l_2)\le\mathstrut & 
C\logg T\qquad
	\left(-\frac{\log T}{C^*\logg T}\le l_1<l_2\le\frac{\log T}{C^*\logg T}\right);
\label{eqn:gammacN}\\
\frac{\zeta'}{\zeta}(s)=\mathstrut & 
   \sum_{|\gamma-t|\le\frac{\logg\gamma}{\log\gamma}}
    \frac{1}{s-\rho}
\label{eqn:gammaczpz}
+
O(\log \gamma_c),
\end{align}
where $s=1/2+1/\log\gamma_c+it$ and $|\gamma_c-t|\le
A/\log\gamma_c$.  By the lemmas in the previous section,
as $T\to\infty$ the set
$Z(T)$ contains $\sim \frac{1}{2\pi}T\log T$ elements.
For the remainder of the proof we will assume~$\gamma_c\in Z(T)$.

\medskip
 
Recall Titchmarsh~\cite{Ti}, Theorem 9.6(A):
\begin{equation}\label{eqn:zpzeasy}
\frac{\zeta'}{\zeta}(s)=-\frac{1}{2}\log
t+O(1)+\sum_{\rho}\left(\frac{1}{s-\rho}-\frac{1}{\rho}\right),
\end{equation}
uniformly for $t\geqslant10$ and
$-1\le\sigma\le2$.
Let $\beta'+i\gamma'$ be a zero of $\zeta'(s)$ where $0<\gamma'<T$
is sufficiently large. Taking the real part \eqref{eqn:zpzeasy}
we have
\begin{equation}\label{e:gprime}
\frac{1}{2}\log\gamma'+O(1)
=
\frac{\beta'-\frac{1}{2}}
	{\left(\beta'-\frac{1}{2}\right)^2+(\gamma'-\gamma_c)^2}
+
\sum_{\gamma\not=\gamma_c}
\frac{\beta'-\frac{1}{2}}
	{\left(\beta'-\frac{1}{2}\right)^2+(\gamma'-\gamma)^2}.
\end{equation}

There are three cases to consider.

\noindent {\bf Case 1.} $\beta'-1/2>|\gamma'-\gamma_c|$.\medskip

Then, by (\ref{e:gprime}), we get
\begin{equation}
\frac{1}{2}\log\gamma'\geqslant\frac{1}{2\left(\beta'-\frac{1}{2}\right)}.
\end{equation}
Thus, we have $\beta'-1/2 \gg 1/\log\gamma'$.\medskip

\noindent {\bf Case 2.} $\beta'-1/2\le|\gamma'-\gamma_c|$ and
$|\gamma'-\gamma_c|>\delta(\epsilon)/\log\gamma'$, where
$\delta(\epsilon)=8/\epsilon^2$. \medskip

By \eqref{e:gprime}, \eqref{eqn:gammacset}, and \eqref{eqn:gammacN}, we have
\begin{equation}
\aligned
\frac{1}{2}\log\gamma'\ll&\left(\beta'-\frac{1}{2}\right)\log^2\gamma'+\sum_{m=1}^{\infty}
\frac{\beta'-\frac{1}{2}}{\left(\frac{m\epsilon}{\log\gamma'}\right)^2}+\sum_{m=0}^{\infty}
\frac{\left(\beta'-\frac{1}{2}\right)\logg\gamma'}
{\left(\frac{\logg\gamma'}{\log\gamma'}\right)^2+\left(\frac{m\logg\gamma'}{\log\gamma'}\right)^2}\\
\ll&\left(\beta'-\frac{1}{2}\right)\log^2\gamma'
\endaligned
\end{equation}
and so again we have
\begin{equation}
\beta'-\frac{1}{2}\gg\frac{1}{\log\gamma'}.
\end{equation}
Here the implied constants depend only on $\epsilon$.
\medskip

\noindent {\bf Case 3.} $\beta'-1/2\le|\gamma'-\gamma_c|$ and
$|\gamma'-\gamma_c|\le\delta(\epsilon)/\log\gamma'$. \medskip

Using \eqref{e:gprime}, \eqref{eqn:gammacset}, and \eqref{eqn:gammacN},
as in Case 2, we get
\begin{equation}
\frac{1}{2}\log\gamma'\geqslant\frac{\beta'-\frac{1}{2}}{2(\gamma'-\gamma_c)^2}
\end{equation}
\begin{equation}
\frac{1}{2}\log\gamma'\ll\frac{\beta'-\frac{1}{2}}{(\gamma'-\gamma_c)^2}+\left(\beta'-\frac{1}{2}\right)\log^2\gamma'\\
\ll\frac{\beta'-\frac{1}{2}}{(\gamma'-\gamma_c)^2}.
\end{equation}
Thus we have
\begin{equation}\label{eqn:twosided}
(\gamma'-\gamma_c)^2\log\gamma'\ll
\beta'-\frac{1}{2}\ll(\gamma'-\gamma_c)^2\log\gamma'.
\end{equation}
Here the implied constants depend only on $\epsilon$.
By~\eqref{eqn:twosided} and the conditions of Case~3 we have
\begin{equation}\label{eqn:nosum}
\frac{\gamma_c-\gamma'}{\left(\beta'-\frac{1}{2}\right)^2+(\gamma'-\gamma_c)^2}
	-\frac{1}{\gamma_c-\gamma'}=O(\log\gamma').
\end{equation}

Now take the imaginary part of~\eqref{eqn:gammaczpz} to get
\begin{equation}\label{eqn:sum1}
\sum_{0<|\gamma-\gamma_c| \le \frac{C^*\logg\gamma_c}{\log\gamma_c}}
\frac{\gamma-\gamma'}{\left(\beta'-\frac{1}{2}\right)^2+(\gamma'-\gamma)^2}
+\frac{\gamma_c-\gamma'}
	{\left(\beta'-\frac{1}{2}\right)^2+(\gamma'-\gamma_c)^2}
= O\left(\log\gamma'\right).
\end{equation}

Finally, by \eqref{eqn:gammacset} we have
\begin{align}\label{eqn:Mcsum}
\sum_{0<|\gamma-\gamma_c|\le
\frac{C^*\logg\gamma_c}{\log\gamma_c}}
	\frac{\gamma-\gamma'}{\left(\beta'-\frac{1}{2}\right)^2
	+(\gamma'-\gamma)^2}-M_{\gamma_c}
=\mathstrut&
\sum_{k=1}^{\infty}\frac{\left(\beta'-\frac{1}{2}\right)^2}{\left(\frac{\epsilon k}{\log\gamma'}\right)^3}\\
=\mathstrut&O(\log\gamma'),
\end{align}
where
\begin{equation}
M_{\gamma_c}=\sum_{0<|\gamma-\gamma_c|\le
\frac{C^*\logg\gamma_c}{\log\gamma_c}}\frac{1}{\gamma-\gamma'}.
\end{equation}
By combining \eqref{eqn:nosum}, \eqref{eqn:sum1}, \eqref{eqn:Mcsum},
and \eqref{eqn:twosided}, we have
\begin{equation}
O(\log\gamma')=M_{\gamma_c}+\frac{1}{\gamma_c-\gamma'}=M_{\gamma_c}
+ A_{\gamma_c} \sqrt{\frac{\log\gamma'}{\beta'-\frac{1}{2}}},
\end{equation}
where $1\ll A_{\gamma_c}\ll 1 $, with the implied constants
depending only on~$\epsilon$.

\medskip

Let $\nu$ be a positive number. Suppose that
\begin{equation}
\left(\beta'-\frac{1}{2}\right)\log\gamma'\le\nu.
\end{equation}
Then, for sufficiently small $\nu$, we see that
only Case 3 is possible for sufficiently large $\gamma'$, namely we
have
\begin{equation}
M_{\gamma_c}+
	 A_{\gamma_c} \sqrt{\frac{\log\gamma'}{\beta'-\frac{1}{2}}}
=O(\log \gamma').
\end{equation}
By this, the assumption in Theorem~\ref{t:la}, and the fact that
$\#Z(T)\sim \frac{1}{2\pi} T\log T$, we have
\begin{equation*}
e^{-C(\nu)}\le\frac{1}{\frac{T}{2\pi}\log
T}\#\{0<\gamma'<T: \gamma_c\in Z(T) \text{ and } |M_{\gamma_c}|\gg \frac{\log\gamma'}{\sqrt{\nu}}(1+O(\sqrt{\nu}))\}.
\end{equation*}
By the last inequality we have
\begin{equation}\label{e:nui}
\frac{e^{-C(\nu)}\log^{2k}T}{\nu^{k}}(1+O(\sqrt{\nu}))^{2k}\frac{T}{2\pi}\log
T\ll\sum_{\substack{\frac{T}{\log T}\le\gamma'\le T\\
	\left(\beta'-\frac{1}{2}\right)\log\gamma'\le\nu\\
	\gamma_c\in Z(T)}}
|M_{\gamma_c}|^{2k}.
\end{equation}

In the next section we describe upper bounds for the moments of $M_{\gamma_c}$.
This will contradict~\eqref{e:nui} and complete the proof of~Theorem~\ref{t:la}.

\subsection{Bounding the moments of $M_{\gamma_c}$}\label{sec:boundMgammac}

We obtain an upper bound on $M_{\gamma_c}$ from a bound on moments
of the logarithmic derivative of the zeta function.  This makes use of
that fact that, assuming
the zeros of the zeta function do not get close together,
the logarithmic derivative can be approximated either by a short
sum over zeros, or by a short Dirichlet series.

\begin{lemma}\label{l:moment} Assume RH.
Let $\gamma'<T$ such that
$|\gamma'-\gamma_c|\le\delta(\epsilon)/\log\gamma'$
and assume \eqref{eqn:gammacset} -- \eqref{eqn:gammaczpz}. Then
\begin{equation}
\frac{M_{\gamma_c}}{i}+\frac{\zeta'}{\zeta}\left(\frac{1}{2}+\frac{1}{\log
T}+it\right)=O_\epsilon (\log T),
\end{equation}
for $|t-\gamma'|\le A/\log\gamma'$.
\end{lemma}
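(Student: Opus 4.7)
The plan is to use the short-sum representation \eqref{eqn:gammaczpz} of $\zeta'/\zeta(s)$ that is built into the definition of $Z(T)$, isolate the contribution of the nearest zero $\rho_c$, and approximate each remaining term $1/(s-\rho)$ by the corresponding term $i/(\gamma-\gamma')$ appearing in $M_{\gamma_c}$, with the errors summable thanks to the well-spacing hypothesis \eqref{eqn:gammacset}.

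At the point $s=\tfrac12+1/\log\gamma_c+it$, write $h=1/\log\gamma_c$ so that $s-\rho=h+i(t-\gamma)$. The $\gamma=\gamma_c$ term of the short sum is immediately $O(\log T)$, because $|t-\gamma_c|\le|t-\gamma'|+|\gamma'-\gamma_c|\ll 1/\log\gamma'$ in the Case~3 regime, hence $|s-\rho_c|\gg 1/\log T$. For each remaining zero $\gamma$ in the sum I would split
\[
\frac{1}{s-\rho}-\frac{i}{\gamma-\gamma'}
=\left[\frac{1}{h+i(t-\gamma)}-\frac{1}{i(t-\gamma)}\right]
+\left[\frac{i}{\gamma-t}-\frac{i}{\gamma-\gamma'}\right],
\]
bounding the first bracket by $O\bigl(h/|t-\gamma|^{2}\bigr)$ and the second by $O\bigl(|t-\gamma'|/|\gamma-\gamma'|^{2}\bigr)$.

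Both errors are of size $O\bigl(1/(\log\gamma_c\,|\gamma-\gamma_c|^{2})\bigr)$ once one observes, using $|\gamma'-\gamma_c|\le\delta(\epsilon)/\log\gamma'$, that $|\gamma-\gamma'|\asymp|\gamma-t|\asymp|\gamma-\gamma_c|$ for $\gamma\ne\gamma_c$. Condition \eqref{eqn:gammacset} then gives $|\gamma-\gamma_c|\ge |m|\epsilon/(2\log\gamma_c)$ when $\gamma$ is the $m$th nearest zero to $\gamma_c$ on either side, as long as $|m|\le K$; since $K=4C^{*}[\logg T]$ dominates the cardinality of the zeros in the range of the short sum, summing $\sum_{m\ge 1}\log\gamma_c/m^{2}$ telescopes to a total error of $O_{\epsilon}(\log T)$. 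Collecting terms gives $\zeta'/\zeta(s)=iM_{\gamma_c}+O_{\epsilon}(\log T)$, and the identity $iM_{\gamma_c}=-M_{\gamma_c}/i$ (since $1/i=-i$) yields the claimed relation.

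The main subtlety lies in the comparison $|\gamma-\gamma'|\asymp|\gamma-\gamma_c|$, which requires the Case~3 bound on $|\gamma'-\gamma_c|$ to be small compared with the minimum gap provided by \eqref{eqn:gammacset}; everything else is a short geometric-series calculation. A minor bookkeeping point is the need to harmonize the summation radius $C^{*}\logg\gamma_c/\log\gamma_c$ used in $M_{\gamma_c}$ with the radius in \eqref{eqn:gammaczpz}, which is handled simply by fixing the constant $C^{*}$ in Proposition~\ref{p:logdea} to agree with the one in the definition of $M_{\gamma_c}$, so that the two sums run over exactly the same set of zeros.
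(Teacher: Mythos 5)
Your argument is essentially the paper's own proof: the paper likewise substitutes the short-sum representation \eqref{eqn:gammaczpz}, absorbs the $\gamma_c$ term into $O(\log T)$, and compares each remaining term of the sum with the corresponding term of $M_{\gamma_c}$ by putting the pair over a common denominator, bounding the numerator by $O(1/\log\gamma')$ and the denominator from below via the spacing condition \eqref{eqn:gammacset}, then summing $\sum_m m^{-2}$; your two-bracket splitting is just a reorganization of that same computation. The one caveat — that for the $O_\epsilon(1)$ zeros nearest $\gamma_c$ the comparison $|\gamma-t|\asymp|\gamma-\gamma_c|$ can degenerate (since $\delta(\epsilon)$ and $A$ need not be small relative to $\epsilon$) and those terms should instead be bounded directly by $O_\epsilon(\log T)$ — is treated with the same level of informality in the paper, so your write-up matches it in both method and rigor.
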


\begin{proof} By the assumptions
we have
\begin{align}
\frac{M_{\gamma_c}}{i}+\frac{\zeta'}{\zeta}\left(\frac{1}{2}+\frac{1}{\log
T}+it\right)
=\mathstrut&\frac{M_{\gamma_c}}{i}+\sum_{0<|\gamma-\gamma_c|\le
\frac{C^*\logg\gamma_c}{\log\gamma_c}}\frac{1}{\frac{1}{\log T}+i(t-\gamma)}
	+O(\log T)\\
=\mathstrut&\sum_{0<|\gamma-\gamma_c|\le
\frac{C^*\logg\gamma_c}{\log\gamma_c}}
	\frac{(\frac{1}{\log T}+i (t-\gamma))}
	{(\gamma-\gamma')(\frac{1}{\log T}+i(t-\gamma))}
+O(\log T)\\
\ll\mathstrut &\sum_{m=1}^{\infty}
     \frac{\frac{1+\delta(\epsilon)}{\log\gamma'}}
		{\left(\frac{m\epsilon}{\log\gamma'}\right)^2}+O(\log T)\\
=\mathstrut &O_\epsilon (\log T).
\end{align}
\end{proof}

\begin{lemma}\label{lem:zpzapprox} Assume RH and
\eqref{eqn:gammacset} -- \eqref{eqn:gammacN}.
Let $s=\frac12+\frac{1}{\log T}+i t$ with $|t|\le T$, and 
let $x=T^{1/100 k}$.  
Then if
$|\gamma'-\gamma_c|\le\delta(\epsilon)/\log\gamma'$
and $|t-\gamma'|\le \epsilon /\log\gamma'$, we have
\begin{equation}
\frac{\zeta'}{\zeta}(s)=-\sum_{n<x^2}\frac{\Lambda_x(n)}{n^s}+O_\epsilon(k\log T),
\end{equation}
where
\begin{equation}
\Lambda_x(n)=
\begin{cases}
\Lambda(n) & 1\le n\le x \cr
\Lambda(n)\frac{\log(\frac{x^2}{n})}{\log x} & x\le n\le x^2
\end{cases} .
\end{equation}
\end{lemma}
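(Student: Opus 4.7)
The plan is to apply a standard Mellin--Perron identity together with a contour shift to reduce everything to a sum over zeros of~$\zeta$, and then to exploit the well-spacing hypotheses \eqref{eqn:gammacset}--\eqref{eqn:gammacN} to control that sum. Starting from
\begin{equation*}
\sum_{n<x^2}\frac{\Lambda_x(n)}{n^s}=\frac{1}{2\pi i}\int_{(2)}\left(-\frac{\zeta'}{\zeta}(s+w)\right)\frac{x^{2w}-x^w}{w^2\log x}\,dw,
\end{equation*}
which encodes the Mellin pair of the smoothed weight $\Lambda_x$, I would shift the contour to $\Re(w)=-1/\log T-\eta$ with $\eta>0$ small. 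The expansion $(x^{2w}-x^w)/(w^2\log x)=1/w+(3/2)\log x+O(w)$ produces residue $-\zeta'/\zeta(s)$ at $w=0$; the simple pole of $\zeta$ at $1$ gives the negligible term $(x^{2(1-s)}-x^{1-s})/((1-s)^2\log x)=O(x^{1/2}/(T^2\log x))$; the trivial zeros contribute $O(x^{-1})$; and the shifted-contour integral is controllable via the $1/|w|^2$ decay of the Mellin weight together with standard bounds on $\zeta'/\zeta$ in vertical strips. Thus the whole task reduces to showing that $\sum_\rho(x^{2(\rho-s)}-x^{\rho-s})/((\rho-s)^2\log x)=O_\epsilon(k\log T)$.

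Since $\Re(\rho-s)=-1/\log T$ under RH, I have $|x^{2(\rho-s)}-x^{\rho-s}|\le 2$, so it suffices to prove $\sum_\rho|s-\rho|^{-2}\ll_\epsilon\log^2 T$; dividing by $\log x=(\log T)/(100k)$ then gives the claimed $O_\epsilon(k\log T)$. I would split this sum over zeros according to $|\gamma-t|$. The outer range $|\gamma-t|\ge 1$ yields $\ll\log T$ by dyadic decomposition against the density $N(T+1)-N(T)\ll\log T$. The intermediate range $10C^*\logg T/\log T\le|\gamma-t|\le 1$ gives $\ll\log^2 T/\logg T$ by the same argument. The inner range $|\gamma-t|\le 10C^*\logg T/\log T$ is the only delicate one: the hypotheses $|t-\gamma'|\le\epsilon/\log\gamma'$ and $|\gamma'-\gamma_c|\le\delta(\epsilon)/\log\gamma'$ place $t$ within $O(1/\log T)$ of~$\gamma_c$; the counting bound \eqref{eqn:gammacN} allows only $O(\logg T)$ zeros in this window; and the gap bound \eqref{eqn:gammacset} forces consecutive zeros there to be at least $\epsilon/(2\log T)$ apart. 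Summing $1/((m\epsilon/(2\log T))^2+(1/\log T)^2)$ for $|m|\ll\logg T$ then gives $\ll_\epsilon\log^2 T$, completing the desired bound.

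The main technical point is ensuring that hypothesis \eqref{eqn:gammacset}, which controls only the $K=4C^*[\logg T]$ immediate neighbors of~$\gamma_c$, really does cover every zero with $|\gamma-t|\le 10C^*\logg T/\log T$. This alignment is exactly what the choice of $K$ guarantees: the average zero spacing is $2\pi/\log T$, and \eqref{eqn:gammacN} (applied with small $|l_1|,|l_2|$) caps the number of zeros in any window of length $O(\logg T/\log T)$ at $O(\logg T)$, so the first $K$ zeros on each side of~$\gamma_c$ span an interval of length at least $\gg\logg T/\log T$. Once this bookkeeping is in place, the arithmetic is routine, and the $\epsilon$-dependence enters only through the implied constant in the final estimate.
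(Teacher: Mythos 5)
Your argument follows the same route as the paper: the Mellin identity plus contour shift reproduces exactly the explicit formula of Titchmarsh, Theorem 14.20 (which is all the paper cites), and the real content of the lemma --- bounding $\frac{1}{\log x}\sum_\rho\bigl|x^{\rho-s}-x^{2(\rho-s)}\bigr|\,|s-\rho|^{-2}$ by $O_\epsilon(k\log T)$ using \eqref{eqn:gammacset}--\eqref{eqn:gammacN} --- is handled essentially as the paper intends, including the bookkeeping point that the $K=4C^*[\log\log T]$ gaps controlled by \eqref{eqn:gammacset} really do cover the inner window around $\gamma_c$.

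Two of your justifications need repair, though both are fixable with tools you already have. First, the contour you shift to, $\Re w=-1/\log T-\eta$ (that is, $\Re(s+w)=\tfrac12-\eta$), does not cross the trivial zeros, so it cannot produce the $O(x^{-1})$ term you list; moreover on that line $\zeta'/\zeta(s+w)$ is not controlled by ``standard bounds in vertical strips,'' since the line hugs the critical line where $\zeta'/\zeta$ has poles. Either push the contour to $\Re(s+w)\to-\infty$, as in Titchmarsh's own proof, or simply quote Theorem 14.20, as the paper does. Second, in the intermediate range $C^*\log\log T/\log T\le|\gamma-t|\le1$ the unit-interval density $N(t+1)-N(t)\ll\log t$ alone gives only $\sum|s-\rho|^{-2}\ll\log^3T/(\log\log T)^2$ (the smallest dyadic scale dominates, since without further input each dyadic block may contain $\gg\log T$ zeros), and after dividing by $\log x$ this is of size $k\log^2T/(\log\log T)^2$, which is too large for the stated conclusion. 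You must invoke \eqref{eqn:gammacN} in this range as well --- it does apply, because $t$ lies within $O_\epsilon(1/\log T)$ of $\gamma_c$ and the windows in \eqref{eqn:gammacN} reach out to distance about $1$ --- which caps each block of length $C^*\log\log T/\log T$ at $O(\log\log T)$ zeros beyond the mean and yields the $\ll\log^2T/\log\log T$ you claim. With these two repairs your proof is correct and coincides with the paper's.
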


\begin{proof}
By \cite{Ti}, Theorem 14.20,
\begin{align}
\frac{\zeta'}{\zeta}(s)=\mathstrut &-\sum_{n<x^2}\frac{\Lambda_x(n)}{n^s}+\frac{x^{2(1-s)}-x^{1-s}}{(1-s)^2\log
x}\cr
&+ \frac{1}{\log
x}\sum_{q=1}^{\infty}\frac{x^{-2q-s}-x^{-2(2q+s)}}{(2q+s)^2}+\frac{1}{\log
x}\sum_{\rho}\frac{x^{\rho-s}-x^{2(\rho-s)}}{(s-\rho)^2}.
\end{align}
The assumptions on the zero spacings give the 
claimed bound on the terms involving the zeros.
\end{proof}

\begin{lemma}\label{lem:sound} (Soundararajan, Lemma 3 of \cite{So1}) Let $T$ be large, and let $2\le x\le T$. Let $k$ be a natural number such
that $x^k\le T/\log T$. For any complex numbers $a(p)$ we have
$$
\int_T^{2T}\left|\sum_{p\le x}\frac{a(p)}{p^{\frac{1}{2}+it}}\right|^{2k}dt\ll
k!\, T \left(\sum_{p\le x} \frac{|a(p)|^2}{p}\right)^{k} ,
$$
where the sum is over the primes.
\end{lemma}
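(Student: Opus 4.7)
The plan is to view $P(s)=\sum_{p\le x}a(p)p^{-s}$ raised to the $k$-th power as a single Dirichlet polynomial of length $\le x^k$, and apply a mean square estimate. Expanding,
\[
P(s)^k=\sum_{n\le x^k}\frac{c(n)}{n^s},\qquad c(n)=\sum_{\substack{(p_1,\ldots,p_k)\\ p_1\cdots p_k=n}}a(p_1)\cdots a(p_k),
\]
where the inner sum runs over \emph{ordered} $k$-tuples of primes $\le x$; the integrand in the lemma is then exactly $|P(\tfrac12+it)^k|^2$.

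I would apply the Montgomery--Vaughan mean value theorem, $\int_0^T|\sum_n a_n n^{-it}|^2\,dt=\sum_n|a_n|^2(T+O(n))$, with $a_n=c(n)/\sqrt n$. This yields
\[
\int_T^{2T}|P(\tfrac12+it)^k|^2\,dt \;\ll\; (T+x^k)\sum_{n\le x^k}\frac{|c(n)|^2}{n},
\]
and the hypothesis $x^k\le T/\log T$ collapses the prefactor to $O(T)$. To bound the coefficient sum, note that the number $r(n)$ of ordered $k$-tuples of primes with product $n$ is at most $k!$ by unique factorization, so Cauchy--Schwarz gives $|c(n)|^2\le k!\sum_{p_1\cdots p_k=n}|a(p_1)\cdots a(p_k)|^2$, and hence
\[
\sum_{n\le x^k}\frac{|c(n)|^2}{n}\le k!\sum_{(p_1,\ldots,p_k)}\frac{|a(p_1)|^2\cdots|a(p_k)|^2}{p_1\cdots p_k}=k!\Bigl(\sum_{p\le x}\frac{|a(p)|^2}{p}\Bigr)^{\!k}.
\]
Combining the two displays produces the stated bound.

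The main obstacle I expect is the off-diagonal contribution one encounters if one opens $|P^k|^2$ as a sum over pairs of ordered $k$-tuples and tries to argue from scratch: the elementary estimate $\int_T^{2T}(M/N)^{it}\,dt\ll x^k$ for distinct integers $M,N\le x^k$, combined with a direct Cauchy--Schwarz bound on $\sum_{m\ne n}|c(m)c(n)|/\sqrt{mn}$, loses a factor of roughly $(x/\log x)^k$ and is insufficient. It is precisely the sharp Hilbert-type inequality built into Montgomery--Vaughan, together with the margin provided by $x^k\le T/\log T$, that keeps these off-diagonal terms subordinate to the diagonal. Apart from this point, the argument is a routine multinomial bookkeeping exercise, in which the diagonal is identified with the at most $k!$ orderings of each unordered $k$-multiset of primes.
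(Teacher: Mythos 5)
Your proof is correct, and the paper itself offers no proof of this lemma---it is quoted verbatim from Soundararajan's paper, whose own argument is exactly the one you give: expand $P(s)^k$ as a Dirichlet polynomial of length at most $x^k$, apply the Montgomery--Vaughan mean value theorem so that the hypothesis $x^k\le T/\log T$ reduces the prefactor to $O(T)$ with an absolute implied constant (important, since $k$ grows in the application), and bound the coefficients via the multinomial/ordered-tuple count $\le k!$. Nothing further is needed.
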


We assemble the above lemmas to bound the moments of~$M_{\gamma_c}$.

By Lemma~\ref{l:moment} and Lemma~\ref{lem:zpzapprox}, with
$A=A_\epsilon$ a constant depending only on $\epsilon$, 
which may be different in each inequality, we have
\begin{align}\label{eqn:Mgammacbound}
|M_{\gamma_c}|^{2k} \ll \mathstrut & A^{2k} \log^{2k}T +  2^{2k} \left|
        \frac{\zeta'}{\zeta}\left(\frac12 + \frac{1}{\log T} + i t\right)
   \right|^{2k} \cr
\ll \mathstrut & A^{2k} k^{2k} \log^{2k}T + 
2^{2k}
\left|\sum_{n<x^2}
   \frac{\Lambda_x(n)}{n^{\frac12 + \frac{1}{\log T} + i t}}\right|^{2k}\cr
\ll \mathstrut & A^{2k} k^{2k} \log^{2k}T + 
2^{2k}
\left|\sum_{p<x^2}
   \frac{\Lambda_x(p)}{p^{\frac12 + \frac{1}{\log T} + i t}}\right|^{2k},
\end{align}
where $x=T^{1/100k}$,
for $|t-\gamma'|\le \delta(\epsilon)/\log\gamma'$,
provided $\gamma_c$ satisfies \eqref{eqn:gammacset} -- \eqref{eqn:gammaczpz}.
That is, provided $\gamma_c\in Z(T)$.

Integrating inequality~\eqref{eqn:Mgammacbound} over the set
\begin{equation}
\left\{T/{\log T} <t < T\ :\ 
|t-\gamma_c|< \delta(\epsilon)/\log T \text{ for some } \gamma_c\in Z(T) \right\}
\end{equation}
and then using Lemma~\ref{lem:sound} we get
\begin{align}
\frac{\delta(\epsilon)}{\log T} \sum_{\substack{\frac{T}{\log T}\le\gamma'\le T\\
	\left(\beta'-\frac{1}{2}\right)\log\gamma'\le\nu \\
	\gamma_c\in Z(T)}}
|M_{\gamma_c}|^{2k}
\ll \mathstrut & A^{2k} k^{2k} T\log^{2k}T +
   2^{2k} \int_{\frac{T}{\log T}}^T
      \left| 
\sum_{p<x^2}
   \frac{\Lambda_x(p)}{p^{\frac12 + \frac{1}{\log T} + i t}}
      \right|^{2k}dt\cr
\ll  \mathstrut& A^{2k} k^{2k} T\log^{2k}T 
+ 2^{2 k} k!\, T \left( \sum_{p<x^2}
   \frac{\Lambda_x(p)^2}{p^{1 + \frac{2}{\log T} }}
      \right)^{k} \cr
\ll  \mathstrut& A^{2k} k^{2k} T\log^{2k}T.
\end{align}
The last step used $\Lambda_x(p)\le \Lambda(p)$ and the fact that
\begin{equation}
\sum_{p\le x}\frac{\Lambda(p)^2}{p} \ll \log^2 x,
\end{equation}
which is a weak form of the prime number theorem.

Rearranging the above inequality and combining with (\ref{e:nui}), we have
\begin{equation}
\frac{e^{-C(\nu)}}{\nu^k}(1+O(\sqrt{\mathstrut\nu}))^{2k}T\log^{2k+1}T
\ll  A^{2k} k^{2k} T\log^{2k+1}T,
\end{equation}
which rearranges to give
\begin{equation}
\left(1+O(\sqrt{\mathstrut\nu})\right)^{2k}
\ll A^{2k} k^{2k} \nu^k e^{C(\nu)}.
\end{equation}
Letting $k=[1/\sqrt{A^2 e\nu}]$, we have 
a contradiction
if $\sqrt{\mathstrut\nu}C(\nu) \to 0$ as
as $\nu\to 0$. 
This completes the proof of Theorem~\ref{t:la}.

\section{Proofs of technical results}

In this section we provide the proofs of Proposition~\ref{p:logdea}
and Proposition~\ref{p:dnumber}.

\subsection{Proof of Proposition~\ref{p:dnumber}}\label{sec:dnumber}
A special case of the Proposition is the following:
\begin{claim} There exists $C_1>0$ such that the number of
$\gamma_n<T$ satisfying
\begin{equation}
N\left(\gamma_n+\frac{lC^*\logg T}{\log
T}\right)-N\left(\gamma_n+\frac{(lC^*-1)\logg T}{\log
T}\right)\le C_1\logg T,
\end{equation}
for all $|l|\le\log T/(C^*\logg T)$, is
\begin{equation}
\frac{T}{2\pi}\log T+O\left(\frac{T}{(\log T)^{m_0}}\right).
\end{equation}
Here $C_1$ is not depending on $C^*$.
\end{claim}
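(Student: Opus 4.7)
The plan is to reduce the counting of exceptional $\gamma_n$ to a Lebesgue measure estimate for a bad set on the real line, and then to bound that measure by a Selberg-style high-moment bound for the short-scale oscillation of the argument function $S(t)$. Write $h=\logg T/\log T$ and $L=\log T/(C^*\logg T)$. For each $l$ with $|l|\le L$, define
\begin{equation*}
A_l:=\bigl\{t\in[1,T]\,:\,N(t+lC^*h)-N(t+(lC^*-1)h)>C_1\logg T\bigr\},
\end{equation*}
so that a $\gamma_n$ violating the claim must lie in $\bigcup_l A_l$. The Riemann--von Mangoldt formula $N(u)=\frac{u}{2\pi}\log\frac{u}{2\pi e}+\frac{7}{8}+S(u)+O(1/u)$ gives
\begin{equation*}
N(t+lC^*h)-N(t+(lC^*-1)h)=\frac{\logg T}{2\pi}+\Delta S(t)+O(1/t),
\end{equation*}
with $\Delta S(t):=S(t+lC^*h)-S(t+(lC^*-1)h)$. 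Hence $t\in A_l$ forces $|\Delta S(t)|\ge(C_1-\tfrac{1}{2\pi}-o(1))\logg T$.

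The heart of the argument is a Selberg-type moment bound for the short-interval difference $\Delta S(t)$, as in~\cite{Iv} (building on Selberg~\cite{Sel} and Fujii). Specifically, uniformly in $k$ up to, say, $k\ll\log T/\logg T$,
\begin{equation*}
\int_0^T |\Delta S(t)|^{2k}\,dt\ll T\cdot\frac{(2k)!}{k!\,2^k}\,(\loggg T)^k,
\end{equation*}
reflecting the Gaussian behaviour with variance $\asymp\loggg T$. Markov's inequality combined with the optimal choice $k\asymp C_1^2(\logg T)^2/\loggg T$ (well within the admissible range) yields the sub-Gaussian tail
\begin{equation*}
|A_l|\ll T\exp\!\bigl(-c_1 C_1^2(\logg T)^2/\loggg T\bigr),
\end{equation*}
which for any fixed $C_1>0$ is $O\!\bigl(T/(\log T)^{m_0+2}\bigr)$ once $T$ is large. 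Crucially, no dependence on $C^*$ enters here.

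To pass from measure to counts of zeros, I would use the unconditional bound $N(u+1)-N(u)\ll\log u$: covering $A_l$ by $\ll|A_l|+1$ unit intervals gives $\#\{\gamma_n\le T:\gamma_n\in A_l\}\ll(|A_l|+1)\log T\ll T/(\log T)^{m_0+1}$. A union bound over the $\ll L\asymp\log T/\logg T$ values of $l$ then leaves at most $O\!\bigl(T/((\log T)^{m_0}\logg T)\bigr)$ exceptional $\gamma_n$, and subtracting from the total count $\tfrac{T}{2\pi}\log T+O(1)$ gives exactly the asymptotic $\tfrac{T}{2\pi}\log T+O(T/(\log T)^{m_0})$.

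The main technical obstacle is the moment bound in Step 2, which must hold with Gaussian-type constants uniformly for $k$ growing with $T$ (here $k\sim(\logg T)^2/\loggg T$, very slowly). This is essentially Selberg's distribution theorem for $S(t)$ on a short scale, realized via a truncated Dirichlet-series approximation to $\log\zeta$ and the mean value theorem for such series; the adaptation to the difference $\Delta S(t)$ with gap $h=\logg T/\log T$ is straightforward but must be tracked carefully since $h\log T$ tends slowly to infinity. Once the moment bound is secured, every other step is routine Markov estimation and elementary zero-density bookkeeping.
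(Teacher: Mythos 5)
Your overall strategy---convert the zero-count condition into largeness of a short-shift difference $S(t+h)-S(t)$ with $h=\logg T/\log T$, bound its high moments, and apply Markov plus a union over the $\ll\log T/\logg T$ values of $l$---is in essence the method the paper itself uses (it quotes Tsang's moment theorem for $S(t+h)-S(t)$ and takes $k=[\logg T]$). However, there is a genuine gap at the step where you pass from the measure of the bad set $A_l$ to the number of bad ordinates. A measurable set $A_l\subset[1,T]$ of measure $|A_l|$ cannot in general be covered by $\ll|A_l|+1$ unit intervals: the number of unit intervals required depends on the structure of $A_l$, not on its measure, and a priori $A_l$ could consist of very many tiny components, one near each of $\asymp T\log T$ ordinates, with negligible total measure but containing almost all zeros. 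So the inequality $\#\{\gamma_n\le T:\gamma_n\in A_l\}\ll(|A_l|+1)\log T$ does not follow, and this is exactly the point that needs an argument. What is required is a stability statement: if $\gamma_n\in A_l$, then for every $t$ in an interval of length $\asymp\logg T/\log T$ adjacent to $\gamma_n$ a slightly enlarged window (say of length $2\logg T/\log T$) still contains more than $C_1\logg T$ zeros, so each bad ordinate forces an interval of length $\gg\logg T/\log T$ to lie inside an enlarged bad set $A_l'$; dividing by the overlap multiplicity (unconditionally $\ll\log T$ ordinates lie within $\logg T/\log T$ of any given $t$) then converts $|A_l'|$ into a count of ordinates. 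This is precisely the role of the paper's device of attaching to each bad $\gamma_n$ an integral of $|\widetilde S|^{2k}$ over a short interval and paying a factor $\log T$ for overlaps.

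Two smaller points. The moment bound you invoke, with the Gaussian constant $\frac{(2k)!}{k!\,2^k}(\loggg T)^k$ uniformly for $k\ll\log T/\logg T$, is stronger than what is available: Tsang's theorem gives that main term plus an error $O\bigl(T(ck)^k\bigl(k+\log^{k-1/2}(2+h\log T)\bigr)\bigr)$, so for $k$ as large as your choice $k\asymp(\logg T)^2/\loggg T$ the honest bound is $\ll T(ck)^k(\loggg T)^k$ with an unspecified absolute constant $c$. This still yields a tail of the shape $T\exp\bigl(-c_1C_1^2(\logg T)^2/\loggg T\bigr)$ with a worse $c_1$, so your Markov step survives, but you should not assert the Gaussian constant in that range; alternatively, since the Claim only asserts the existence of some $C_1$, you may simply take $k=[\logg T]$ and $C_1=C_1(m_0)$ large, as the paper does, which already gives $|A_l'|\ll T/(\log T)^{m_0+2}$ and sidesteps any uniformity question for larger $k$. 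Finally, in the bookkeeping the Riemann--von Mangoldt main term is $\frac{\logg T}{2\pi}\cdot\frac{\log t}{\log T}\le\frac{\logg T}{2\pi}$ (harmless), and the final count should be phrased relative to $N(T)=\frac{T}{2\pi}\log\frac{T}{2\pi e}+O(\log T)$ rather than $\frac{T}{2\pi}\log T+O(1)$, though the paper's own statement shares this looseness.
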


The proof of Claim follows easily from the same method below. Thus,
we omit the proof of it.

From now on, we are assuming that $\gamma_n$ satisfies $T/(\log
T)^{m_0+1}<\gamma_n<T$ and Claim. We recall
\begin{align}
\int_T^{T+H}|S(t+h)-S(t)|^{2k}dt=\mathstrut&\frac{H(2k)!}{(2\pi^2)^kk!}\log^k(2+h\log
T)\\
&+O\left(H(ck)^k\left(k+\log^{k-1/2}(2+h\log T)\right)\right)
\end{align}
uniformly for $T^a<H\le T$, $a>1/2$, $0<h<1$ and any positive
integer $k$, where $c$ is a positive constant and
$S(t)=\frac{1}{\pi}\arg\zeta(1/2+it)$. For this, see \cite[Theorem
4]{Ts}. Thus we have
\begin{equation}\label{e:tsang}
\int_0^T|S(t+h)-S(t)|^{2k}dt\ll T\left(Ak\right)^{2k},
\end{equation}
where $\log(2+h\log T)\ll k$. We note that
\begin{equation}\label{e:ost}
S(t+h)-S(t)=N(t+h)-N(t)-\frac{h}{2\pi}\log
t+O\left(\frac{h^2+1}{t}\right),
\end{equation}
where $N(t)$ is the number of zeros of $\zeta(s)$ in $0<\Im s<t$. By
this, we have
\begin{equation*}
\widetilde{S}(t,l_1,l_2)=N\left(t+\frac{(l_2-l_1)C^*\logg T}{\log
T}\right)-N(t)-\frac{(l_2-l_1)C^*\logg
T}{2\pi}+O\left(\frac{1}{t}\right),
\end{equation*}
where
\begin{equation}
\widetilde{S}(t,l_1,l_2)=S\left(t+\frac{(l_2-l_1)C^*\logg T}{\log
T}\right)-S(t)
\end{equation}
Using Claim, the last formula and (\ref{e:ost}), we have
\begin{align}
N(n,l_1,l_2)\le\mathstrut &\left|\widetilde{S}(t,l_1,l_2)\right|+\logg T\\
\le\mathstrut &\left|\widetilde{S}(t-h,l_1,l_2)\right|+3\logg T\\
&\mathstrut +\sum_{j=1}^{2}N\left(\gamma_n+\frac{l_jC^*\logg T}{\log
T}\right)-N\left(\gamma_n+\frac{(l_jC^*-1)\logg T}{\log T}\right)\\
\le\mathstrut &C_2\logg T+\left|\widetilde{S}(t-h,l_1,l_2)\right|
\end{align}
for $t=\gamma_n+l_1C^*\logg T/\log T$ and $0\le
h\le\logg T/\log T$, where $C_2=\max\{2C_1+3,A\}$. Using this,
we have
\begin{align}
\sum_{\substack{\frac{T}{(\log T)^{m_0+1}}<\gamma_n<T\\
N(n,l_1,l_2)\geqslant C\logg T}}(C\logg T)^{2k}
\mathstrut & \mathstrut \frac{\logg T}{\log
T}\\
\ll \mathstrut &T\log T(2C_2\logg
T)^{2k}+\sum_{\gamma_n<T}\int_{\gamma_n+\frac{(l_1C^*-1)\logg
T}{\log T}}^{\gamma_n+\frac{l_1C^*\logg T}{\log
T}}\left|2\widetilde{S}(t,l_1,l_2)\right|^{2k}dt\\
\ll \mathstrut & T\log T(2C_2\logg T)^{2k}
	+\log T\int_0^T\left|2\widetilde{S}(t,l_1,l_2)\right|^{2k}dt\\
\ll \mathstrut &T\log T\left((2C_2\logg T)^{2k}+(2C_2k)^{2k}\right)
\end{align}
for any sufficiently large $T$ and any $|l_1|,|l_2|\le\log
T/(C^*\logg T)$ with $0<l_2-l_1\le 2\log T/(C^*\logg T)$. We
put
\begin{equation}
k=[\logg T]\qquad\text{and}\qquad C=e^{m_0+2}(2C_2+1).
\end{equation}
By these and the last inequality, we have
\begin{equation*}
\sum_{\substack{|l_1|,|l_2|\le\frac{\log T}{C^*\logg
T}\\0<l_2-l_1\le\frac{2\log T}{C^*\logg T}}}
\sum_{\substack{\frac{T}{(\log T)^{m_0+1}}<\gamma_n<T\\
N(n,l_1,l_2)\geqslant C\logg T}}1\ll\frac{T(\log T)^4(2C_2\logg
T)^{2k}}{\left(C\logg T\right)^{2k}}\ll\frac{T}{(\log T)^{m_0}}.
\end{equation*}
We complete the proof of Proposition \ref{p:dnumber}.

\subsection{Proof of Proposition~\ref{p:logdea}}\label{sec:logdea}

We recall
\begin{equation}
\frac{\zeta'}{\zeta}(s)=O(\log
t)+\sum_{|\gamma-t|\le1}\frac{1}{s-\rho}
\end{equation}
holds uniformly for $t>1$ and $-2\le\R s\le1$. For this,
see \cite[Theorem 9.6 (A)]{Ti}. Using the last formula, it suffices
to show that the number of $\gamma_n$ in $T/(\log
T)^{m_0+1}\le\gamma_n<T$ such that $\gamma_n$ satisfies the
condition in Proposition \ref{p:dnumber} and
\begin{equation}
\sum_{\frac{C^*\logg T}{\log
T}<|\gamma_n-\gamma_m|\le1}\frac{1}{\gamma_n-\gamma_m}=O(\log
T)
\end{equation}
is
\begin{equation}
\frac{T}{2\pi}\log T+O\left(\frac{T}{(\log
T)^{m_0}}\right)\qquad(T\to\infty),
\end{equation}
because for $s=1/2+1/\log T+it$ and $|\gamma_n-t|\le A/\log
T$, we have
\begin{equation*}
\sum_{\frac{C^*\logg T}{\log
T}<|\gamma_n-\gamma_m|\le1}\frac{1}{s-\rho}-\frac{1}{i(\gamma_n-\gamma_m)}=O\left(\frac{1}{\log
T}\sum_{m=1}^{\infty}\frac{\logg T}{\left(\frac{m\logg T}{\log
T}\right)^2}\right)=O(\log T).
\end{equation*}
We recall that Proposition \ref{p:dnumber} implies
\begin{equation}
N\left(\gamma_n+\frac{lC^*\logg T}{\log T}\right)
=N(\gamma_n)+\frac{lC^*\logg T}{2\pi}+O\left(\logg T\right)
\end{equation}
for any integer $l$ with $|l|\le\log T/(C^*\logg T)$. This
immediately implies that for a sufficiently large $C^*>1$, we have
\begin{equation}
\max_{0\le k\le
N}|2\gamma_n-\gamma_{m_2+k}-\gamma_{m_1-k}|\frac{\log
T}{2\pi}=O(\logg T),
\end{equation}
where $\gamma_{m_1}$ is the greatest one in
$[\gamma_n-1,\gamma_n-C^*\logg T/\log T)$, $\gamma_{m_1}$ the least
one in $(\gamma_n+C^*\logg T/\log T,\gamma_n+1]$ and $N$ the largest
positive integer such that $\gamma_{m_1-N}$ and $\gamma_{m_2+N}$
belong to $[\gamma_n-C^*\logg T/\log T,\gamma_n+C^*\logg T/\log T]$.
By this and putting $ M(n)=\max_{0\le k\le
N}|2\gamma_n-\gamma_{m_2+k}-\gamma_{m_1-k}|,$ we have
\begin{equation}
M(n)\ll\frac{\logg T}{\log T}
\end{equation}
Using this and Proposition \ref{p:dnumber} and the fact
\cite[Theorems 9.3 and 14.13]{Ti} that the number of zeros between
$t$ and $t+1$ is
\begin{equation}
\frac{\log t}{2\pi}+O\left(\frac{\log t}{\logg
t}\right)\qquad(\text{as } t\to\infty),
\end{equation}
we have
\begin{align}
\sum_{\frac{C^*\logg T}{\log
T}<|\gamma_n-\gamma_m|\le1}\frac{1}{\gamma_n-\gamma_m}=\mathstrut&\sum_{0\le
k\le
N}\frac{2\gamma_n-\gamma_{m_2+k}-\gamma_{m_1-k}}{(\gamma_n-\gamma_{m_2+k})(\gamma_n-\gamma_{m_1-k})}+O(\log
T)\\
=\mathstrut&O\left(M(n)\sum_{k=1}^{\infty}\frac{\logg T}{\left(\frac{k\logg
T}{\log
T}\right)^2}\right)+O(\log T)\\
=\mathstrut&O(\log T).
\end{align}
Thus, we complete the proof of Proposition \ref{p:logdea}.


\begin{thebibliography}{99}

\bibitem{CI} J.B. Conrey and H. Iwaniec, \emph{Spacing of zeros of
Hecke $L$-functions and the class number problem}, Acta Arith. 103 (2002)
no.~3, 259-312.

\bibitem{Due} E. Due\~nez, D.W. Farmer, S.~Froehlich, C.~P.~Hughes,
F.~Mezzadri, and T.~Phan,
\emph{Roots of the derivative of the Riemann zeta function 
  and of characteristic polynomials}, preprint.

\bibitem{FR1} D.W. Farmer and R. Rhoades, \emph{Differentiation evens
out zero spacing}, Trans. Amer. Math. Soc.,  Vol 37, No. 9, 2005,
p3789-3811.
arXiv:math.NT/0310252

\bibitem{GY} M.Z. Garaev and C.Y. Y{\i}ld{\i}r{\i}m,
\emph{On small distances between ordinates of zeros of
$\zeta(s)$ and $\zeta'(s)$}, 
Int. Math. Res. Notices {\bf 2007} (2007) Art. ID rnm091, 14 pp.

\bibitem{Iv} A. Iv{\i}\'{c}, \emph{On small values of the Riemann zeta-function on the critical line and gaps between zeros},
Lietuvos Mat. Rinkinys {\bf 42} (2002), 31-45

\bibitem{K} H. Ki, \emph{The zeros of the derivative of the Riemann
zeta function near the critical line},
Int Math Res Notices (2008) Vol. 2008, Art. ID rnn064, 23 pp.
arXiv:0701726

\bibitem{L} Levinson N. \emph{More than one third of zeros of
Riemann's zeta function are on $\sigma = 1/2$},
 Adv. Math. {\bf 13}, 383--436 (1974).


\bibitem{LM} N. Levinson and H. Montgomery, \emph{Zeros of the
derivatives of the Riemann zeta-function},  Acta Math.  133 (1974),  49-65.

\bibitem{Mez03} Mezzadri F. Random matrix theory and the zeros of
  $\zeta'(s)$. \textit{J. Phys. A: Math. Gen.} \textbf{36}, 2945--2962
  (2003).

\bibitem{M} H. Montgomery,
\emph{The pair correlation of zeros of the zeta function},
Proc. Sympos. Pure Math. 24, Amer. Math. Soc., Providence, R. I. (1973), 181--193.

\bibitem{Sel} A. Selberg, On the normal density of primes in small intervals,and the difference between consecutive primes,
Arch. Math. Naturvid. 47, No. 6 (1943), 87--105.

\bibitem{S} Soundararajan, \emph{The horizontal distribution
of zeros of $\zeta'(s)$}, Duke Math. J. (1998) {\bf 91}, no 1, 33-59.

\bibitem{So1} Soundararajan, \emph{Moments of the Riemann zeta-function},
11 pp., to appear in Annals of Math.
arXiv:0612106.

\bibitem{Sp} Speiser A.  \emph{Geometrisches zur Riemannschen
Zetafunktion},  Math. Ann. {\bf 110} 514--21 (1934).

\bibitem{Ti}
E. C. Titchmarsh, \emph{The Theory of the Riemann Zeta-function},
2nd ed., revised by D. R. Heath-Brown, Oxford University Press,
Oxford, 1986

\bibitem{Ts} K.-M. Tsang, \emph{Some $\Omega$-theorems for the Riemann zeta-function}, Acta Arith. {\bf 46} (1986),
369--395


\bibitem{Z} Y. Zhang, \emph{On the zeros of $\zeta'(s)$ near the critical line},
Duke Math. J. (2001) {\bf 110}, No. 3 555-572.


\end{thebibliography}
\end{document}